\documentclass[11pt,reqno,letterpaper]{amsart}

\usepackage[T1]{fontenc}
\usepackage[utf8]{inputenc}
\usepackage{amssymb,amsmath,amsthm,mathrsfs,mathtools}
\usepackage{doi}
\usepackage{bookmark}
\usepackage{hyperref}
\hypersetup{pdfstartview={FitH},hidelinks}

\numberwithin{equation}{section}

\newtheorem{theorem}{Theorem}
\newtheorem{lemma}[theorem]{Lemma}
\newtheorem{proposition}[theorem]{Proposition}
\newtheorem{corollary}[theorem]{Corollary}

\theoremstyle{definition}

\newtheorem{example}[theorem]{Example}

\renewcommand{\leq}{\leqslant}

\renewcommand{\le}{\leqslant}
\renewcommand{\ge}{\geqslant}

\newcommand{\C}{\mathbb{C}}
\newcommand{\N}{\mathbb{N}}
\newcommand{\R}{\mathbb{R}}
\newcommand{\T}{\mathbb{T}}
\newcommand{\Z}{\mathbb{Z}}
\newcommand{\cube}{\{0,1\}}
\newcommand{\SU}{\mathrm{SU}}
\newcommand{\dd}{\,\mathrm{d}}
\newcommand{\wt}{\widetilde}

\begin{document}

\title[Hausdorff--Young inequalities for \texorpdfstring{$\SU(1,1)$}{SU(1,1)} Fourier products]{Hausdorff--Young inequalities for \texorpdfstring{$\SU(1,1)$}{SU(1,1)} Fourier products indexed by binary cubes}

\author[T. Crmari\'{c}]{Ton\'{c}i Crmari\'{c}}
\address[T.\,C.]{Department of Mathematics, Faculty of Science, University of Split, Ru\dj{}era Bo\v{s}kovi\'{c}a 33, 21000 Split, Croatia}
\email{tcrmaric@pmfst.hr}

\author[J. Rup\v{c}i\'c]{Jelena Rup\v{c}i\'c}
\address[J.\,R.]{Faculty of Transport and Traffic Sciences, University of Zagreb, Vukeli\'{c}eva 4, 10000 Zagreb, Croatia}
\email{jrupcic@fpz.unizg.hr}

\subjclass[2020]{Primary 42A16, 42B10; Secondary 05D05, 11B30, 15B57, 47B35}

\keywords{nonlinear Fourier transform, discrete nonlinear Fourier transform, $\SU(1,1)$-valued Fourier products, binary cube, Hausdorff--Young inequality, additive energies, alternating chains}

\begin{abstract}
We study nonlinear logarithmic Hausdorff–Young inequalities with constant \(1\) for \(\mathrm{SU}(1,1)\)-valued Fourier products indexed by binary cubes. The inequalities are proved for the optimal range of exponents, extending into the region \(1/p+1/q<1\). The proof combines majorization, a known sharp two-point inequality, and tree iteration. We also obtain the corresponding chain inequality and nonlinear bounds for generating functions of alternating chains, recovering the known sharp bounds for additive energies on the binary cube as the amplitudes vanish.
\end{abstract}

\maketitle

\section{Introduction}

A discrete $\SU(1,1)$-valued nonlinear Fourier transform is obtained from a finitely supported sequence $F=(F_n)_{n\in\Z}$ of complex numbers in the open unit disk.  Set
\[
A_n=\frac{1}{(1-|F_n|^2)^{1/2}},\quad
B_n=\frac{F_n}{(1-|F_n|^2)^{1/2}},
\]
so that for each $n$ we have $A_n>0$, $B_n\in\C$ and $|A_n|^2-|B_n|^2=1$. We form the ordered product
\begin{equation}\label{eq:dnft}
\prod_{n\in\Z}
\begin{pmatrix}
A_n & B_n z^n\\
\overline{B_n}z^{-n} & \overline{A_n}
\end{pmatrix}
=
\begin{pmatrix}
a(z)&b(z)\\
\overline{b(z)}&\overline{a(z)}
\end{pmatrix},
\quad z=e^{2\pi i t}\in \mathbb S^1,
\end{equation}
where the factors are ordered from left to right as $n$ increases. Since $F$ is finitely supported, only finitely many factors in the product are different from the identity matrix. We identify functions on $\mathbb S^1:=\{z\in\C:|z|=1\}$ with
functions on $\T$ through the parametrization
$z=e^{2\pi i t}$. Throughout the paper, $\T=\R/\Z$ is equipped with normalized Haar measure.  All logarithms are natural, except for $\log_2$.  For each fixed $z\in\mathbb S^1$, every factor in \eqref{eq:dnft}, and hence the product itself, belongs to the group
\[ \SU(1,1)=
\left\{
\begin{pmatrix}
A&B\\
\overline{B}&\overline{A}
\end{pmatrix}
:
A,B\in\C,\ |A|^2-|B|^2=1
\right\}. \]
This model appears naturally in the theory of orthogonal polynomials on the unit circle; see Simon's monographs \cite{Simon1,Simon2} and the lecture notes of Tao and Thiele \cite{TT03}.  The transform satisfies the nonlinear Parseval identity, which goes back to Verblunsky \cite{Verblunsky36},
\begin{equation*}
\int_{\T}\log|a(t)|^2\dd t=\sum_{n\in\Z}\log|A_n|^2.
\end{equation*}
This identity motivates nonlinear Hausdorff--Young inequalities of the form
\begin{equation}\label{eq:nhy}
\left\|\bigl(\log|a(t)|^2\bigr)^{1/2}\right\|_{\textup{L}^q(\T)}
\le C_p
\left\|\bigl(\log|A_n|^2\bigr)^{1/2}\right\|_{\ell^p(\Z)}
\end{equation} for $1\leq p<2$ and its conjugate exponent $2<q\leq\infty$, i.e., $1/p+1/q=1$.

The continuous version of \eqref{eq:nhy} for Dirac scattering goes back to Christ and Kiselev \cite{CK01a,CK01b}.  Muscalu, Tao, and Thiele \cite{MTT03} raised the question of constants independent of the exponent for the scattering transform, and Kova\v{c} proved such bounds in a Cantor-group model \cite{K12}. Kova\v{c}, Oliveira e Silva, and one of the present authors obtained a sharp inequality for small potentials for the continuous transform \cite{KOR19} and asymptotically sharp discrete inequalities for $\SU(1,1)$-valued Fourier products \cite{KOR22}. In a recent preprint, Suragan proved the discrete inequality with the sharp constant 1 and obtained the corresponding continuous inequality \cite{Suragan2026}.

For variational estimates, see the work of Oberlin, Seeger, Tao, Thiele, and Wright \cite{OSTTW12}; in the discrete $\SU(1,1)$ setting, Oliveira e Silva proved a variational nonlinear Hausdorff--Young inequality \cite{OeS18}. Poltoratski proved a pointwise convergence theorem for the scattering data of continuous Dirac systems \cite{Poltoratski24}. In the discrete setting, one of the present authors studied convergence of lacunary $\SU(1,1)$ products \cite{Rupcic19}. Saksida developed an inverse theory for discrete nonlinear Fourier transforms \cite{Saksida22} and studied probabilistic and combinatorial aspects of nonlinear Fourier expansions \cite{Saksida24}. 

In this paper, we study a finite multiparameter setting of
$\SU(1,1)$-valued Fourier products. A family of $2^d$ matrices,
indexed by the binary cube $\{0,1\}^d$, is combined recursively
along a complete binary tree, with one independent torus variable
attached to each level. For real $q>2$, we write
\[
\binom{q}{q/2}
:=
\frac{\Gamma(q+1)}{\Gamma(q/2+1)^2}.
\]
We prove that, whenever $1<p<2<q<\infty$ and
\begin{equation}\label{eq:range}
\frac1p
\ge
\frac1q\log_2\binom{q}{q/2},
\end{equation}
the analogue of \eqref{eq:nhy} holds for the root matrix with constant \(1\), with the \(\textup L^q\)-norm taken over \(\mathbb T^d\) and the \(\ell^p\)-norm taken over the leaves. This
exponent range, which is optimal, contains the usual conjugate pairs and extends
beyond them. For conjugate pairs, the estimate also follows from Suragan’s discrete theorem \cite{Suragan2026}. Our proof does not use this result. A sparse choice of nonidentity matrices at the leaves gives the analogous estimate for chain products.

The proof reduces the estimate at each branching to a comparison between
the logarithmic expression associated with a two-factor $\SU(1,1)$ product and the squared modulus of a two-term trigonometric polynomial. The sharp two-point Hausdorff–Young inequality of Kovač, Shiraki, and one of the present authors \cite{CKS25} then applies.
Iteration along the tree, together with Minkowski's inequality,
yields the full multiparameter result. Linearizing this result as the amplitudes tend to zero gives the sharp Hausdorff–Young inequality on the binary cube from \cite{CKS25}, which proves that the exponent range
in \eqref{eq:range} is optimal. For the chain inequality, necessity
is already present in the two-factor case.

The same estimate also has combinatorial consequences. For subsets of the binary cube, the estimate gives nonlinear bounds for generating functions whose coefficients count alternating chains. Letting the amplitudes tend to zero recovers the sharp inequality for additive energy on the binary cube from \cite{CKS25}.

\section{Fourier products indexed by binary cubes}

For $s\in\R$, set
\[
D(s):=
\begin{pmatrix}
e^{\pi i s}&0\\
0&e^{-\pi i s}
\end{pmatrix} \in \SU(1,1).
\]
Then
\[
D(s)
\begin{pmatrix}
A&B\\
\overline{B}&\overline{A}
\end{pmatrix}
D(s)^{-1}
=
\begin{pmatrix}
A&Be^{2\pi i s}\\
\overline{B}e^{-2\pi i s}&\overline{A}
\end{pmatrix}.
\]
For words $\sigma$ and $\tau$ over the alphabet $\cube$, write $\sigma\tau$ for their concatenation. We use the convention $\cube^0=\{\emptyset\}$, where $\emptyset$
denotes the empty word.

\begin{theorem}\label{thm:cube}
Let $1<p<2<q<\infty$ satisfy \eqref{eq:range}.  Fix $d\in\N$, and for each $\omega\in\cube^d$ let
\[
G_\omega=\begin{pmatrix}A_\omega&B_\omega\\ \overline{B_\omega}&\overline{A_\omega}\end{pmatrix}\in\SU(1,1).
\]
Set
\[
P_\omega^{(0)}:=G_\omega,
\qquad \omega\in\cube^d.
\]
For $1\le n\le d$ and $\sigma\in\cube^{d-n}$, define recursively
\begin{equation}\label{eq:tree-recursion}
P_\sigma^{(n)}(t_1,\dots,t_n)
:=P_{\sigma0}^{(n-1)}(t_1,\dots,t_{n-1})D(t_n)P_{\sigma1}^{(n-1)}(t_1,\dots,t_{n-1})D(t_n)^{-1}.
\end{equation}
For $t=(t_1,\dots,t_n)\in\T^n$, each $P_\sigma^{(n)}(t)$ belongs to $\SU(1,1)$; 
write
\[
P_\sigma^{(n)}(t)=
\begin{pmatrix}
a_\sigma^{(n)}(t)&b_\sigma^{(n)}(t)\\
\overline{b_\sigma^{(n)}(t)}&\overline{a_\sigma^{(n)}(t)}
\end{pmatrix}.
\]
Then
\begin{equation}\label{eq:cube-main}
\left\|\bigl(\log|a_\emptyset^{(d)}|^2\bigr)^{1/2}\right\|_{\textup{L}^q(\T^d)}
\le
\left(\sum_{\omega\in\cube^d}\bigl(\log|A_\omega|^2\bigr)^{p/2}\right)^{1/p}.
\end{equation}
\end{theorem}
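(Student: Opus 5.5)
The plan is to induct up the tree, proving at each branching a two-factor estimate for a fixed value of the lower torus variables, and then to assemble the levels with Minkowski's inequality. For $1\le n\le d$ write $t=(t',t_n)$ with $t'\in\T^{n-1}$, and set
\[
u_\sigma^{(n)}(t):=\bigl(\log|a_\sigma^{(n)}(t)|^2\bigr)^{1/2}.
\]
The claim to prove by induction on $n$ is
\[
\|u_\sigma^{(n)}\|_{\textup L^q(\T^n)}\le\Bigl(\sum_{\omega}(\log|A_{\sigma\omega}|^2)^{p/2}\Bigr)^{1/p},
\]
with the sum over $\omega\in\cube^n$. The case $n=d$, $\sigma=\emptyset$ is \eqref{eq:cube-main}.

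\emph{One branching, $t'$ fixed.} Expanding \eqref{eq:tree-recursion} with the conjugation formula for $D$ gives
\[
a_\sigma^{(n)}=a_{\sigma0}a_{\sigma1}+b_{\sigma0}\overline{b_{\sigma1}}\,e^{-2\pi i t_n},
\]
where the children are evaluated at $t'$. Write $|a_{\sigma j}|=\cosh r_j$ and $|b_{\sigma j}|=\sinh r_j$, and put $x=u_{\sigma0}^2$, $y=u_{\sigma1}^2$. Then
\[
|a_\sigma^{(n)}|^2=\alpha+\beta\cos(2\pi(t_n+\theta)),
\]
where $\theta=\theta(t')$ is a phase, $\alpha=\cosh^2r_0\cosh^2r_1+\sinh^2r_0\sinh^2r_1$ and $\beta=2\cosh r_0\cosh r_1\sinh r_0\sinh r_1$. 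Compare the random variables, in $t_n$,
\[
X=\log\bigl(\alpha+\beta\cos(2\pi(t_n+\theta))\bigr),\qquad Y=x+y+2\sqrt{xy}\cos(2\pi(t_n+\theta))=\bigl|\sqrt x+\sqrt y\,e^{2\pi i(t_n+\theta)}\bigr|^2 .
\]
The two-point Verblunsky identity gives $\int X\dd t_n=x+y=\int Y\dd t_n$. Moreover $X=\psi(Y)$ with $\psi$ increasing and concave, being the logarithm of an affine function.

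The maximum satisfies $\max X=\log\cosh^2(r_0+r_1)$ and $\max Y=(\sqrt x+\sqrt y)^2$. Hence $\max X\le\max Y$ follows from subadditivity of $h(r)=(2\log\cosh r)^{1/2}$, which I would get from concavity of $h$ together with $h(0)=0$. A concave $\psi$ with equal means and $\psi(\max Y)\le\max Y$ forces $\psi(s)-s$ to change sign only once, from positive to negative. By the standard single-crossing (Karamata) argument this gives $X\prec Y$ in convex order.

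Applying convex order to the convex function $s\mapsto s_+^{q/2}$ (note $X\ge0$) yields
\[
\int_\T (u^{(n)}_\sigma)^q\dd t_n\le\int_\T\bigl|\sqrt x+\sqrt y\,e^{2\pi i(t_n+\theta)}\bigr|^q\dd t_n .
\]
By translation invariance in $t_n$, the phase $\theta(t')$ disappears. The sharp two-point inequality of \cite{CKS25}, valid exactly in the range \eqref{eq:range}, then bounds the right-hand side by
\[
\bigl(u_{\sigma0}(t')^p+u_{\sigma1}(t')^p\bigr)^{q/p}.
\]

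\emph{Assembly.} Integrating over $t'$ gives
\[
\|u_\sigma^{(n)}\|_{\textup L^q}^p\le\bigl\|u_{\sigma0}^p+u_{\sigma1}^p\bigr\|_{\textup L^{q/p}}\le\|u_{\sigma0}\|_{\textup L^q}^p+\|u_{\sigma1}\|_{\textup L^q}^p .
\]
The second step is Minkowski's inequality in $\textup L^{q/p}$, which is legitimate because $q/p>1$. Applying the induction hypothesis to each child closes the induction. The base case $n=0$ is the identity $u_\omega^{(0)}=(\log|A_\omega|^2)^{1/2}$.

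\emph{Expected obstacle.} The main difficulty is the pointwise-to-distribution comparison at one branching. A naive pointwise bound $X\le Y$ fails at the minimum, since $h(|r_0-r_1|)\le|h(r_0)-h(r_1)|$ goes the wrong way. So the argument genuinely needs the majorization step: equal means (Parseval), concavity of $\psi$, and the top-endpoint inequality. Within that step, the concavity of $h(r)=\sqrt{2\log\cosh r}$ is the calculus fact I would verify first.
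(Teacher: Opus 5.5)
Your architecture matches the paper's: at fixed $t'$, a majorization $X\prec Y$ followed by the two-point inequality of \cite{CKS25}, then Minkowski's inequality in $\textup L^{q/p}$ to climb the tree. The gap is in the single-crossing step. You claim that concavity of $\psi$, equal means, and $\psi(\max Y)\le\max Y$ force $H(s):=\psi(s)-s$ to change sign only once, from positive to negative. This does not follow. Concavity only makes $\{H\ge0\}$ an interval, and nothing on your list forces that interval to contain $\min Y$.

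A concrete example: take the random variable $\cos(2\pi t)$ on $\T$ and $\psi(s)=s+\epsilon(1-2s^2)$ with $0<\epsilon<1/4$. Then $\psi$ is increasing and concave, the means agree, and $\psi(1)<1$. Yet $H$ is negative at both endpoints and positive at $0$. In such a configuration the conclusion itself fails. Indeed, $X\prec Y$ forces $X\ge\min Y$ almost everywhere (test with the convex function $s\mapsto(\min Y-s)_+$), while $H(\min Y)<0$ gives $X<\min Y$ on a set of positive measure. So a condition at the bottom endpoint is not optional.

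The missing input is $H(\min Y)\ge0$, i.e.\ $\min X\ge\min Y$. In your notation this reads
\[
\log\cosh^2(r_0-r_1)\ge\bigl(h(r_0)-h(r_1)\bigr)^2,\qquad\text{equivalently}\qquad\bigl|h(r_0)-h(r_1)\bigr|\le h(|r_0-r_1|),
\]
and it follows immediately from the subadditivity of $h$ that you already prove. Given $H(\min Y)\ge0\ge H(\max Y)$, concavity makes $\{H\ge0\}$ an initial segment of $[\min Y,\max Y]$, which is the single crossing you need. This is exactly the paper's argument, which checks $H(-1)\ge0$ and $H(1)\le0$ for $H=\Lambda-L$; the extra fact $H(0)\ge0$ there only locates the crossing point.

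Your ``expected obstacle'' paragraph actually states this fact---that at the minimum the comparison between $X$ and $Y$ reverses. But you treat it only as an obstruction to a pointwise bound. It is in fact an indispensable hypothesis of the majorization and belongs on your list of ingredients. With it added, and with the trivial case $r_0r_1=0$ (where $X=Y$ is constant) handled separately, your proof goes through.
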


A sparse assignment of nonidentity matrices to the leaves yields the following
chain inequality.

\begin{corollary}\label{cor:chain}
Let $1<p<2<q<\infty$ satisfy \eqref{eq:range}. Let $N\in\N$, and
for $j=0,1,\dots,N$ let
\[
G_j=
\begin{pmatrix}
A_j&B_j\\
\overline{B_j}&\overline{A_j}
\end{pmatrix}
\in\SU(1,1).
\]
For $j=1,\dots,N$ and $s\in\T$ set
\[
\wt G_j(s):=D(s)G_jD(s)^{-1}.
\]
For $t=(t_1,\dots,t_N)\in\T^N$, define
\[
Q_N(t)
:=
G_0\wt G_1(t_1)\cdots\wt G_N(t_N)
=
\begin{pmatrix}
a_N(t)&*\\
*&*
\end{pmatrix}.
\]
Then
\begin{equation}\label{eq:chain}
\left\|
\bigl(\log|a_N|^2\bigr)^{1/2}
\right\|_{\textup{L}^q(\T^N)}
\le
\left(
\sum_{j=0}^{N}
\bigl(\log|A_j|^2\bigr)^{p/2}
\right)^{1/p}.
\end{equation}
\end{corollary}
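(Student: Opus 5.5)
The plan is to derive Corollary~\ref{cor:chain} from Theorem~\ref{thm:cube} with $d=N$, by choosing the leaf matrices so that the root matrix $P_\emptyset^{(N)}(t)$ is exactly $Q_N(t)$. The right-hand sides then agree, because identity leaves have $A_\omega=1$ and so contribute $(\log 1)^{p/2}=0$.

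First I fix how the tree is indexed. In \eqref{eq:tree-recursion} the letter appended at level $n$ is the last letter of a word of length $d-n+1$. So for a leaf $\omega=\omega_1\cdots\omega_N$, the letter $\omega_k$ is the branching at level $N-k+1$, with variable $t_{N-k+1}$. I would assign
\[
G_{0^N}:=G_0,\qquad G_{0^{N-j}10^{j-1}}:=G_j\quad(1\le j\le N),
\]
and the identity matrix $I$ to every other leaf. These $N+1$ leaves are distinct, so the right-hand side of \eqref{eq:cube-main} is exactly that of \eqref{eq:chain}.

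Next I would prove an auxiliary observation by induction on $n$. Take any node $\sigma$ whose subtree has at most one non-identity leaf, namely the leaf $\sigma0^n$ reached by appending zeros. Then $P^{(n)}_\sigma$ equals that leaf matrix and does not depend on $t$. Indeed, in the recursion the factor $P_{\sigma1}^{(n-1)}$ is then $I$, and $D(t_n)\,I\,D(t_n)^{-1}=I$. For $0\le n\le N-1$, the node $0^{N-n-1}1$ satisfies this hypothesis: its subtree contains only the leaf $0^{N-n-1}10^{n}$, which is $G_{n+1}$. Hence $P^{(n)}_{0^{N-n-1}1}=G_{n+1}$.

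Then I would show by induction that $P^{(n)}_{0^{N-n}}(t_1,\dots,t_n)=G_0\wt G_1(t_1)\cdots\wt G_n(t_n)$. The case $n=0$ is the assignment itself. For the inductive step, the recursion gives
\[
P^{(n+1)}_{0^{N-n-1}}=P^{(n)}_{0^{N-n}}\,D(t_{n+1})\,G_{n+1}\,D(t_{n+1})^{-1},
\]
which appends $\wt G_{n+1}(t_{n+1})$. At $n=N$ this gives $P^{(N)}_\emptyset=Q_N$, so $a^{(N)}_\emptyset=a_N$, and \eqref{eq:cube-main} becomes \eqref{eq:chain}.

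The only real obstacle is the bookkeeping of which letter of $\omega$ corresponds to which level. If the indexing were reversed, the leaf choice would change accordingly, but the structure of the argument would stay the same.
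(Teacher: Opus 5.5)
Your proposal is correct and is essentially the paper's proof. Both use the same sparse leaf assignment: $G_0$ at $0^N$, $G_j$ at $0^{N-j}10^{j-1}=e_{N-j+1}$, and $I$ elsewhere. Both then run the same induction along the all-zero spine, where the right child at each level collapses to the single leaf $G_{n+1}$; your auxiliary observation and index bookkeeping just spell out what the paper states in one line.
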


The proof of Theorem~\ref{thm:cube} is based on the following inequality.

\begin{proposition}\label{prop:twofactor}
Let $1<p<2<q<\infty$ satisfy \eqref{eq:range}.   For $j=0,1$, let
\[
G_j=
\begin{pmatrix}
A_j&B_j\\
\overline{B_j}&\overline{A_j}
\end{pmatrix}
\in\SU(1,1).
\]
Then
\begin{equation}\label{eq:twofactor}
\left(\int_{\T}\bigl(\log|A_0A_1+B_0\overline{B_1}e^{-2\pi i t}|^2\bigr)^{q/2}\dd t\right)^{1/q}
\le
\left(\bigl(\log|A_0|^2\bigr)^{p/2}+\bigl(\log|A_1|^2\bigr)^{p/2}\right)^{1/p}.
\end{equation}
\end{proposition}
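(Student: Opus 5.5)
The plan is to compare the left-hand side of \eqref{eq:twofactor} with the $\mathrm L^q$ norm of a two-term trigonometric polynomial via majorization, and then to invoke the sharp two-point Hausdorff--Young inequality of \cite{CKS25}. First parametrize: write $|A_j|=\cosh r_j$ and $|B_j|=\sinh r_j$ with $r_j\ge0$. After a rotation of $t$ (which does not change the integral), set $u=\cos(2\pi t)$. A direct computation gives
\[
|A_0A_1+B_0\overline{B_1}e^{-2\pi i t}|^2=\alpha+\beta u,
\qquad \alpha=\cosh^2r_0\cosh^2r_1+\sinh^2r_0\sinh^2r_1,\quad \beta=2\cosh r_0\cosh r_1\sinh r_0\sinh r_1 .
\]
Put $x=(\log|A_0|^2)^{1/2}=h(r_0)$ and $y=h(r_1)$, where $h(r):=(2\log\cosh r)^{1/2}$. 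Then $|x+ye^{2\pi i t}|^2=x^2+y^2+2xyu$. Define $f(u)=\log(\alpha+\beta u)$ and $g(u)=x^2+y^2+2xyu$. Both are nondecreasing functions of $u$, where $u$ carries the arcsine distribution induced by $t$.

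The key steps are as follows.
\begin{enumerate}
\item[(i)] The two functions have equal means. Indeed, $\int_\T f=\log|A_0|^2+\log|A_1|^2=x^2+y^2=\int_\T g$, where the first equality is the two-factor nonlinear Parseval identity (equivalently Jensen's formula, since $|A_0A_1|>|B_0B_1|$).
\item[(ii)] The difference $f-g$ is concave in $u$, because $f$ is concave and $g$ is affine.
\item[(iii)] The endpoint inequalities $f(1)\le g(1)$ and $f(-1)\ge g(-1)$ hold. Since $\alpha\pm\beta=\cosh^2(r_0\pm r_1)$, these read
\[
h(r_0+r_1)^2\le (h(r_0)+h(r_1))^2,\qquad h(|r_0-r_1|)^2\ge (h(r_0)-h(r_1))^2 .
\]
Both follow from subadditivity of $h$ on $[0,\infty)$ together with monotonicity of $h$.
\end{enumerate}

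From (ii) and (iii), the concave function $f-g$ is negative at $u=1$ and positive at $u=-1$. Hence it has exactly one sign change, from $+$ to $-$. Combined with the equal means in (i), the Karlin--Novikoff cut criterion shows that the law of $g$ majorizes that of $f$ in the convex order. Since $f\ge0$ (as $\alpha-\beta\ge1$) and $s\mapsto s^{q/2}$ is convex on $[0,\infty)$ for $q>2$, we obtain
\[
\int_\T f^{q/2}\le\int_\T g^{q/2}=\int_\T|x+ye^{2\pi i t}|^{q}\dd t .
\]
The sharp two-point inequality of \cite{CKS25}, valid exactly in the range \eqref{eq:range}, bounds the last quantity by $(x^p+y^p)^{q/p}$. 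This gives \eqref{eq:twofactor}.

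The main obstacle I expect is step (iii), specifically the subadditivity of $h(r)=\sqrt{2\log\cosh r}$. I would try to prove that $h$ is concave with $h(0)=0$, since subadditivity then follows. The asymptotics support this: $h(r)\sim r$ near $0$ and $h(r)\sim\sqrt{2r}$ at infinity. To verify concavity, I would show $h''\le0$, which is equivalent to
\[
2L\,\mathrm{sech}^2 r\le\tanh^2r,\qquad L=\log\cosh r .
\]
Writing $v=\tanh^2 r$, this becomes $-\log(1-v)\le v/(1-v)$. That is the elementary inequality $\log(1+w)\le w$ with $w=v/(1-v)$. If this works out, the remaining steps are routine.
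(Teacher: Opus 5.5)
Your proposal is correct and follows essentially the same route as the paper. Both arguments reduce to $|c+de^{-2\pi i t}|$ and show that the logarithmic expression is majorized by the squared two-term polynomial. This uses concavity of the difference $f-g$, equal means, and the endpoint signs, which come from subadditivity of $\sqrt{\log\cosh}$ (proved via the same inequality $\log(1+w)\le w$); the two-point inequality of \cite{CKS25} then finishes. The only differences are cosmetic: you invoke the Karlin--Novikoff cut criterion where the paper uses the Hardy--Littlewood--P\'{o}lya rearrangement lemma, you work with $\log|\cdot|^2$ instead of $\log|\cdot|$, and you omit the paper's superfluous check that $H(0)\ge0$.
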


The logarithm on the left-hand side is nonnegative, since its argument is the squared modulus of the upper-left entry of an $\SU(1,1)$ matrix. 

The case \(q=p/(p-1)\) of the two-factor estimate also follows from Suragan’s discrete theorem \cite{Suragan2026}, applied to a sequence supported on \(\{0,1\}\), after absorbing the phases by a translation in \(t\) as in the proof below. Iterating this estimate as in the proof of Theorem~\ref{thm:cube} and using monotonicity of \(\textup L^q\)-norms on the normalized torus gives Theorem~\ref{thm:cube} throughout \(2<q\le p/(p-1)\). The remaining admissible range satisfies \(1/p+1/q<1\).

\begin{example}
Let $d=2$, and let $G_{00},G_{01},G_{10},G_{11}\in\SU(1,1)$. Define
\[
a_\sigma(t_1):=A_{\sigma 0}A_{\sigma 1}+B_{\sigma 0}\overline{B_{\sigma 1}}e^{-2\pi i t_1},
\quad
b_\sigma(t_1):=A_{\sigma 0}B_{\sigma 1}e^{2\pi i t_1}+B_{\sigma 0}\overline{A_{\sigma 1}},
\]
for $\sigma\in\cube$. Then
\[
P_\sigma^{(1)}(t_1)=
\begin{pmatrix}
 a_\sigma(t_1) & b_\sigma(t_1)\\
 \overline{b_\sigma(t_1)} & \overline{a_\sigma(t_1)}
\end{pmatrix},
\quad \sigma\in\cube,
\]
and the root matrix
\[
P_{\emptyset}^{(2)}(t_1,t_2)=P_0^{(1)}(t_1)\,D(t_2)\,P_1^{(1)}(t_1)\,D(t_2)^{-1}
\]
has
\begin{equation*}
a_{\emptyset}^{(2)}(t_1,t_2)=a_0(t_1)a_1(t_1)+b_0(t_1)\overline{b_1(t_1)}e^{-2\pi i t_2}.
\end{equation*}
Consequently, Theorem~\ref{thm:cube} gives
\begin{equation}\label{eq:depth-two-ineq}
\left\|\bigl(\log|a_{\emptyset}^{(2)}|^2\bigr)^{1/2}\right\|_{\textup{L}^q(\T^2)}
\le
\left(
\sum_{\omega\in\cube^2}\bigl(\log|A_\omega|^2\bigr)^{p/2}
\right)^{1/p}.
\end{equation}
In the setting of Proposition~\ref{prop:lin-cube}, dividing the left-hand side of \eqref{eq:depth-two-ineq} by \(\epsilon\) and letting \(\epsilon\downarrow0\) gives
\[
\left\|c_{00}+c_{01}e^{2\pi i t_1}+c_{10}e^{2\pi i t_2}+c_{11}e^{2\pi i(t_1+t_2)}\right\|_{\textup{L}^q(\T^2)},
\]
which, after the reflection $(t_1,t_2)\mapsto(-t_1,-t_2)$, is the left-hand side of the $d=2$ instance of the  inequality from \cite{CKS25}.
\end{example}

\section{Proofs of the main estimates}

\subsection{Proof of Proposition~\ref{prop:twofactor}} We first recall the rearrangement principle used in the logarithmic comparison. This is the Hardy--Littlewood--P\'{o}lya theorem in integral form \cite{HLP}. The result can also be viewed as a continuous variant of Karamata's inequality \cite{Karamata32}.

\begin{lemma}\label{lem:hlp}
Let $f,g\in \textup{L}^1([0,1])$, and let $f^*,g^*$ be their decreasing rearrangements.  Assume that
\[
\int_0^a f^*(x)\dd x\le \int_0^a g^*(x)\dd x\qquad(0\le a\le1),
\]
and
\[
\int_0^1 f^*(x)\dd x=\int_0^1 g^*(x)\dd x.
\]
Then for every convex function $\Phi\colon\R\to\R$ such that $\Phi\circ f,\Phi\circ g\in \textup{L}^1([0,1])$ one has
\[
\int_0^1\Phi(f(t))\dd t\le \int_0^1\Phi(g(t))\dd t.
\]
\end{lemma}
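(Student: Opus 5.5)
This is the integral Hardy--Littlewood--P\'olya majorization theorem, and I would prove it by reducing to a representation of convex functions through hinge functions. Since $t\mapsto f(t)$ and $x\mapsto f^*(x)$ are equimeasurable on $[0,1]$, we have $\int_0^1\Phi(f)=\int_0^1\Phi(f^*)$, and likewise for $g$. So I would assume throughout that $f=f^*$ and $g=g^*$ are nonincreasing.

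The first step is the elementary case $\Phi(x)=(x-c)_+$ for $c\in\R$. I would use the variational identity
\[
\int_0^1 (h(x)-c)_+\dd x=\sup_{0\le a\le 1}\Bigl(\int_0^a h(x)\dd x-ca\Bigr),
\]
valid for nonincreasing $h$; the supremum is attained at $a=|\{h>c\}|$. Applied to $f^*$ and $g^*$, the hypothesis $\int_0^a f^*\le\int_0^a g^*$ immediately gives $\int(f^*-c)_+\le\int(g^*-c)_+$. Affine functions $\Phi(x)=\alpha x+\beta$ are handled by the equality of total integrals.

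The second step extends to general convex $\Phi$. On a compact interval $[m,M]$, $\Phi$ agrees with an affine function plus $\int (x-c)_+\dd\mu(c)$, where $\mu=\Phi''\ge0$ is taken as a measure on $(m,M)$. When $f$ and $g$ are bounded, Tonelli then gives the inequality directly. In the general $\textup{L}^1$ case I would truncate: apply the result to $f_n=\max(\min(f,n),-n)$ and $g_n$ similarly. However, truncation does not obviously preserve majorization, and this is the main obstacle. To avoid it, I would rather approximate $\Phi$. I would take $\Phi_n$ convex, equal to $\Phi$ on $[-n,n]$, and continued linearly by tangent lines outside. Then $\Phi_n$ is an affine function plus $\int_{[-n,n]}(x-c)_+\dd\mu(c)$ with $\mu$ finite, so the first step and Tonelli (justified since $(x-c)_+\le|x|+|c|$ is integrable) give $\int\Phi_n(f)\le\int\Phi_n(g)$.

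Finally I would let $n\to\infty$. The functions $\Phi_n\uparrow\Phi$ pointwise, since the tangent lines lie below $\Phi$, and $\Phi_n\ge\Phi_1$, which is affine and hence has integrable composition with $f$ and $g$. Monotone convergence then yields $\int\Phi(f)\le\int\Phi(g)$, where both sides are finite by assumption. The only delicate point is checking the hinge representation of $\Phi_n$, including the one-sided derivatives at $\pm n$, but this is standard.
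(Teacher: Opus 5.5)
Your proof is correct. It also does something the paper does not: the paper gives no proof of Lemma~\ref{lem:hlp} and simply quotes it as the integral Hardy--Littlewood--P\'olya theorem (a continuous Karamata inequality).

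You supply the classical argument in full.
\begin{itemize}
\item You reduce to nonincreasing $f=f^*$, $g=g^*$ by equimeasurability.
\item You settle the hinge functions $(x-c)_+$ via the identity $\int_0^1(h-c)_+=\sup_{0\le a\le1}\bigl(\int_0^a h-ca\bigr)$ for nonincreasing $h$. This turns the partial-integral hypothesis directly into the needed inequality, and the equality of means handles the affine part.
\item You pass to general $\Phi$ by replacing it with convex $\Phi_n$ that are affine outside $[-n,n]$. Each $\Phi_n$ is an affine function plus $\int_{[-n,n]}(x-c)_+\dd\mu_n(c)$ with $\mu_n$ finite, possibly with atoms at $\pm n$, so Tonelli applies.
\end{itemize}
Approximating $\Phi$ rather than truncating $f,g$ is the right choice, since truncation can destroy the equality of the means.

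One slip: $\Phi_1$ is not affine, because it coincides with $\Phi$ on $[-1,1]$. It does have at most linear growth, so $\Phi_1\circ f,\Phi_1\circ g\in\textup{L}^1([0,1])$ and your monotone convergence step still works. The monotonicity $\Phi_n\le\Phi_{n+1}$ also holds, because a subgradient of $\Phi$ at $n$ is also a subgradient of $\Phi_{n+1}$ at $n$.

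A slightly cleaner alternative is to note that $\ell\le\Phi_n\le\Phi$, where $\ell$ is a supporting line of $\Phi$ at $0$. Then dominated convergence applies directly, and you do not need monotonicity in $n$ at all.

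In short, the citation buys brevity, while your version is self-contained and shows that only the hinge functions $(x-c)_+$ need to be checked.
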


The next lemma is the only nonlinear comparison in the proof. It compares the logarithmic expression associated with a two-factor $\SU(1,1)$ product with the squared modulus of a two-term trigonometric polynomial. 

\begin{lemma}\label{lem:maj}
Let $u,v\in[0,\infty)$, and set
\[
A:=\log\cosh u,\quad B:=\log\cosh v,\quad
c:=\cosh u\cosh v,\quad d:=\sinh u\sinh v.
\]
Define
\[
f(t):=\log|c+de^{-2\pi i t}|,
\quad
g(t):=A+B+2\sqrt{AB}\cos(2\pi t),
\quad t\in[0,1].
\]
Then $f$ is majorized by $g$ on $[0,1]$.  Consequently, for every $r\ge1$,
\begin{equation}\label{eq:maj-consequence}
\int_0^1 f(t)^r\dd t\le \int_0^1 g(t)^r\dd t.
\end{equation}
\end{lemma}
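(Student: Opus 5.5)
The plan is to verify the two conditions of Lemma~\ref{lem:hlp} directly and then apply it with $\Phi(x)=|x|^r$. The mean condition comes from Jensen's formula, and the partial-integral condition comes from a single-crossing argument. \textbf{Equal means.} Since $c\ge d\ge0$, the function $z\mapsto\log|c+dz|$ is harmonic on a neighbourhood of the closed unit disk when $c>d$, so its mean over $\mathbb S^1$ is $\log c=A+B$. The degenerate case $u=v=0$ is trivial. Also $\int_0^1 g=A+B$, because the cosine integrates to zero.

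\textbf{Reduction to the half-period.} Write $\theta=2\pi t$. Both $f$ and $g$ are symmetric under $t\mapsto 1-t$ and are nonincreasing functions of $t\in[0,1/2]$, since each is increasing in $\cos\theta$. Hence $f^*(x)=f(x/2)$ and $g^*(x)=g(x/2)$. It therefore suffices to show
\[
\int_0^\varphi \bigl(F(\theta)-G(\theta)\bigr)\dd\theta\le 0\qquad(0\le\varphi\le\pi),
\]
where $F(\theta)=\tfrac12\log(c^2+d^2+2cd\cos\theta)$ and $G(\theta)=A+B+2\sqrt{AB}\cos\theta$. At $\varphi=\pi$ this integral equals zero by the mean computation above.

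\textbf{Single crossing.} Write $F-G=H(\cos\theta)$, where $H(x)=\tfrac12\log(c^2+d^2+2cdx)-2\sqrt{AB}\,x-(A+B)$. This $H$ is concave on $[-1,1]$. Set $h(x):=\sqrt{\log\cosh x}$. Then $H(1)=\log\cosh(u+v)-(h(u)+h(v))^2$ and $H(-1)=\log\cosh|u-v|-(h(u)-h(v))^2$. Both endpoint signs, $H(1)\le0$ and $H(-1)\ge0$, follow once $h$ is subadditive on $[0,\infty)$. Indeed, $H(1)\le0$ is $h(u+v)\le h(u)+h(v)$, and $H(-1)\ge0$ is $|h(u)-h(v)|\le h(|u-v|)$.

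Given the endpoint signs, concavity of $H$ makes $\{x:H(x)\ge0\}$ an interval containing $-1$. So $F-G\le0$ on some $[0,\theta_0]$ and $F-G\ge0$ on $[\theta_0,\pi]$. The cumulative integral then first decreases and afterwards increases to its final value $0$, so it is $\le0$ throughout. This gives the majorization.

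\textbf{Subadditivity of $h$.} This is the expected main obstacle. Since $h(0)=0$, subadditivity follows from concavity of $h$. Put $L=\log\cosh$, so that $L'=\tanh$ and $L''=\operatorname{sech}^2$. Then
\[
h''=\frac{2LL''-(L')^2}{4L^{3/2}},
\]
so concavity is equivalent to $2\log\cosh x\,\operatorname{sech}^2x\le\tanh^2x$, that is, $2\log\cosh x\le\sinh^2x$. This holds because $\log\cosh x=\tfrac12\log(1+\sinh^2x)\le\tfrac12\sinh^2x$.

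\textbf{Consequence.} Finally, $f\ge\log(c-d)=\log\cosh(u-v)\ge0$ and $g\ge(\sqrt A-\sqrt B)^2\ge0$. Applying Lemma~\ref{lem:hlp} with the convex function $\Phi(x)=|x|^r$, $r\ge1$, yields \eqref{eq:maj-consequence}. Integrability is automatic because $f$ and $g$ are bounded.
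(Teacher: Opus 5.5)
Your proposal is correct and follows the paper's proof essentially step for step. The shared steps are: equal means; the concave difference $H=\Lambda-L$, whose endpoint signs $H(1)\le0\le H(-1)$ come from concavity (hence subadditivity) of $\sqrt{\log\cosh}$; the resulting single crossing of $f^*(x)=f(x/2)$ and $g^*(x)=g(x/2)$; and an application of Lemma~\ref{lem:hlp}. The differences are cosmetic: the paper also checks $H(0)\ge0$, which your argument shows is unnecessary, and your separate degenerate case is vacuous because $c-d=\cosh(u-v)\ge1$ always.
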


\begin{proof}
Since $c-d=\cosh(u-v)\ge1$, one has $f\ge0$.  Also $g\ge(\sqrt A-\sqrt B)^2\ge0$.  Write
\[
\Lambda(s):=\frac12\log(c^2+d^2+2cds),
\quad
L(s):=A+B+2\sqrt{AB}\,s,
\quad -1\le s\le1.
\]
Then $f(t)=\Lambda(\cos2\pi t)$ and $g(t)=L(\cos2\pi t)$.  The function $\Lambda$ is increasing and concave, while $L$ is increasing and affine.

Set $\psi(x):=\sqrt{\log\cosh x}$ for $x\ge0$.  For $x>0$, a direct computation gives
\[
\psi''(x)=\frac{2\log\cosh x-\sinh^2x}{4(\log\cosh x)^{3/2}\cosh^2x}\le0,
\]
because $2\log\cosh x=\log(1+\sinh^2x)\le\sinh^2x$.  Hence $\psi$ is concave, and since $\psi(0)=0$, it is subadditive:
\begin{equation}\label{eq:psi-subadditive}
\psi(x+y)\le\psi(x)+\psi(y),\quad x,y\ge0.
\end{equation}
It also follows that $|\psi(x)-\psi(y)|\le\psi(|x-y|)$.

Let $H:=\Lambda-L$.  The function $H$ is concave on $[-1,1]$.  At the endpoints,
\[
H(1)=\psi(u+v)^2-(\psi(u)+\psi(v))^2\le0
\]
by \eqref{eq:psi-subadditive}, and
\[
H(-1)=\psi(|u-v|)^2-(\psi(u)-\psi(v))^2\ge0.
\]
Moreover $H(0)\ge0$, since
\[
H(0)=\frac12\log(c^2+d^2)-(A+B)\ge\frac12\log(c^2)-(A+B)=0.
\]
Thus there is $s_0\in[0,1]$ such that $H(s)\le0$ for $s\in[s_0,1]$ and $H(s)\ge0$ for $s\in[-1,s_0]$.

The means of $f$ and $g$ agree.  Indeed, with $\rho=d/c\in[0,1)$,
\[
f(t)=\log c+\log|1+\rho e^{-2\pi i t}|,
\]
and the second term has mean zero on $[0,1]$.  Hence $\int_0^1f=A+B=\int_0^1g$.

Since $\cos(2\pi t)$ and $\cos(\pi x)$ are equimeasurable on $[0,1]$, and since $\Lambda$ and $L$ are increasing,
\[
f^*(x)=\Lambda(\cos\pi x),\qquad g^*(x)=L(\cos\pi x),\qquad0\le x\le1.
\]
With $x_0=\pi^{-1}\arccos s_0$, the difference $f^*-g^*$ is nonpositive on $[0,x_0]$ and nonnegative on $[x_0,1]$, while its integral over $[0,1]$ is zero.  Therefore
\[
\int_0^a f^*(x)\dd x\le\int_0^a g^*(x)\dd x\qquad(0\le a\le1),
\]
which is the desired majorization.  Lemma~\ref{lem:hlp}, applied to $\Phi(x)=x^r$, gives \eqref{eq:maj-consequence}.
\end{proof}

The remaining result we need is the sharp inequality from \cite[Lemma~4]{CKS25}.

\begin{lemma}\label{lem:cks}
Let $1<p<2<q<\infty$ satisfy \eqref{eq:range}.  Then for all $\alpha,\beta\in[0,\infty)$,
\begin{equation*}
\left(\int_0^1|\alpha+\beta e^{-2\pi i t}|^q\dd t\right)^{1/q}
\le(\alpha^p+\beta^p)^{1/p}.
\end{equation*}
\end{lemma}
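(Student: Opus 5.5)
By homogeneity and symmetry I would reduce Lemma~\ref{lem:cks} to a one-variable inequality. Then I would show that the endpoint $\alpha=\beta$ is exactly the hypothesis \eqref{eq:range}, and that the remaining cases are controlled by that endpoint. The intermediate range of the single variable is the part I have not settled.

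\textbf{Reductions.} Both sides are positively $1$-homogeneous in $(\alpha,\beta)$, and the case $\beta=0$ is trivial. So I may assume $\alpha>0$ and normalize $\alpha=1$, $\beta=x\ge0$. The identity $|x+e^{-2\pi i t}|=|1+xe^{2\pi i t}|$ and the invariance of Haar measure under $t\mapsto -t$ give a symmetry under $(\alpha,\beta)\mapsto(\beta,\alpha)$. Hence it suffices to treat $0\le x\le1$.

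Since $(1+x^p)^{1/p}$ decreases in $p$, it also suffices to take the smallest admissible $p$. For this $p$, equality holds in \eqref{eq:range}:
\[
2^{q/p}=\binom{q}{q/2}.
\]
In the variable $s=\log x$, set
\[
I(x):=\int_0^1|1+xe^{2\pi i t}|^q\dd t,\qquad R(s):=\frac{1}{q}\log I(e^s)-\frac{1}{p}\log(1+e^{ps}).
\]
The goal is $R\le0$. The symmetry gives $R(-s)=R(s)$, and $R(s)\to0$ as $s\to-\infty$.

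\textbf{Series and endpoints.} For $0\le x\le1$, expand $(1+xe^{2\pi i t})^{q/2}$ binomially and apply Parseval:
\[
I(x)=\sum_{k\ge0}\binom{q/2}{k}^2x^{2k}.
\]
Evaluating at $x=1$ and using Gauss' formula for ${}_2F_1$ at argument $1$ gives $I(1)=\binom{q}{q/2}$. So $R(0)=0$, which is exactly the equality case of \eqref{eq:range}.

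As $x\downarrow0$ we have $I(x)^{1/q}=1+\frac q8x^2+O(x^4)$, while $(1+x^p)^{1/p}=1+\frac1px^p+\dots$. Since $p<2$, the right-hand side wins near $0$. Thus $R<0$ for $s$ near $-\infty$, and the inequality holds near both ends of $[0,1]$.

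\textbf{Intermediate range (main obstacle).} I must exclude a positive interior maximum of $R$ on $(-\infty,0)$. I would try the following, in order.
\begin{itemize}
\item First, show that $x\mapsto I(x)^{p/q}-x^p$ is nonincreasing on $[0,1]$, or some equivalent monotone quantity. One route is to differentiate and compare the coefficient sequences $\binom{q/2}{k}^2$ against the expansion of $(1+x^p)^{q/p}$, using the equality $I(1)=2^{q/p}$.
\item Second, establish a single sign change of $R'$ on $(-\infty,0)$ and combine it with $R'(0)=0$ from evenness. Convexity of $\log I(e^s)$ in $s$ is available, since it is the logarithm of a positive combination of exponentials.
\end{itemize}
If neither works, I would prove the two-point estimate by a Bonami--Beckner-type interpolation argument adapted to $p$ below the conjugate exponent, keeping $p$ at the critical value above throughout. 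This is where I expect the real difficulty. It is also why the paper imports the statement from \cite[Lemma~4]{CKS25}: the endpoint analysis alone does not settle the intermediate range.
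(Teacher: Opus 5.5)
\textbf{Genuine gap: the intermediate range is not proved.} Your reductions are sound: homogeneity, the symmetry $x\mapsto 1/x$ (so $R(-s)=R(s)$), the Parseval series $I(x)=\sum_k\binom{q/2}{k}^2x^{2k}$, and $I(1)=\binom{q}{q/2}$ via Gauss. Two slips need correcting. First, \eqref{eq:range} is the upper bound $p\le q/\log_2\binom{q}{q/2}$ and the right-hand side decreases in $p$, so the case to treat is the \emph{largest} admissible $p$; that is where equality holds, as you then use. Second, $I(x)^{1/q}=1+\tfrac q4x^2+O(x^4)$, not $\tfrac q8 x^2$, which is harmless.

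The real problem is that the proposal does not prove the lemma. All of its content is the inequality $I(x)^{p/q}\le 1+x^p$ for $0<x<1$, and you leave exactly this open. Even the claim that the inequality holds near $x=1$ is not justified by what you computed, since you only have $R(0)=R'(0)=0$. Local validity needs a second-order computation. The normalized weights $\binom{q/2}{k}^2$ have mean $q/4$ and variance $q^2/(16(q-1))$ (Gauss's formula applied to derivatives of ${}_2F_1(-q/2,-q/2;1;\cdot)$). This gives $R''(0)=\tfrac14\bigl(\tfrac{q}{q-1}-p\bigr)$, which is negative only because $p>q/(q-1)$ at the critical exponent.

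\textbf{Why your suggested routes do not close it.} Set $h(x)=I(x)^{p/q}-x^p$. It satisfies $h(0)=h(1)=1$ and $h<1$ for small $x>0$, so it cannot be nonincreasing; the lemma is precisely the statement $h\le 1$. Convexity of $\log I(e^s)$ gives no sign information, because $\tfrac1p\log(1+e^{ps})$ is convex too. Having a single sign change of $R'$ on $(-\infty,0)$ would suffice, but it is the whole difficulty, and you state it without proof.

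Interpolation with constant $1$ only reaches the conjugate line. For $p\le q/(q-1)$ the lemma already follows from the classical Hausdorff--Young inequality on $\T$ and monotonicity of $\ell^p$ norms. The new range is $p>q/(q-1)$, above the conjugate exponent rather than below it. There the extremal configuration is $\alpha=\beta$, which interpolation cannot detect.

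In the variable $u=x^p$, the function $F(u)=I(u^{1/p})^{p/q}$ satisfies $F(0)=1$, $F(1)=2$, $F'(1)=1$. So the line $1+u$ passes through $(0,F(0))$ and is tangent to the graph at $u=1$. Moreover, $F$ is convex near $u=0$ but strictly concave near $u=1$. Any proof must therefore be sharp at this tangential contact, and neither global convexity nor monotonicity is available.

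The paper does not supply such an argument either; it imports the statement from \cite[Lemma~4]{CKS25}. As written, your proposal establishes the reductions and the behaviour at $x=0$ and $x=1$. The range $0<x<1$ still needs either a genuine one-variable argument, for instance showing that $F$ has a single inflection point on $[0,1]$, or the citation.
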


\begin{proof}[Proof of Proposition~\ref{prop:twofactor}]
Choose $u_j\ge0$ and phases such that
\[
A_j=e^{i\alpha_j}\cosh u_j,
\qquad
B_j=e^{i\beta_j}\sinh u_j,
\qquad j=0,1.
\]
After a translation in $t$,
\[
|A_0A_1+B_0\overline{B_1}e^{-2\pi i t}|
=|c+de^{-2\pi i t}|,
\]
where $c=\cosh u_0\cosh u_1$ and $d=\sinh u_0\sinh u_1$.  Put
\[
A:=\log\cosh u_0,
\qquad B:=\log\cosh u_1,
\qquad r:=q/2.
\]
By Lemma~\ref{lem:maj} we have
\[
\int_0^1\bigl(\log|c+de^{-2\pi i t}|\bigr)^{q/2}\dd t
\le
\int_0^1\bigl(A+B+2\sqrt{AB}\cos2\pi t\bigr)^{q/2}\dd t.
\]
For $\alpha:=\sqrt{2A}$ and $\beta:=\sqrt{2B}$,
\[
A+B+2\sqrt{AB}\cos(2\pi t)=\frac12|\alpha+\beta e^{-2\pi i t}|^2.
\]
Lemma~\ref{lem:cks} gives
\[
\int_0^1\bigl(A+B+2\sqrt{AB}\cos2\pi t\bigr)^{q/2}\dd t
\le
2^{-q/2}(\alpha^p+\beta^p)^{q/p}.
\]
Since $\alpha^p=(2A)^{p/2}$ and $\beta^p=(2B)^{p/2}$, this is
\[
\bigl(A^{p/2}+B^{p/2}\bigr)^{q/p}.
\]
Multiplying by the factor $2^{q/2}$ coming from $\log|\cdot|^2=2\log|\cdot|$, and using $\log|A_j|^2=2\log\cosh u_j$, yields \eqref{eq:twofactor}.
\end{proof}

\subsection{Proofs of Theorem~\ref{thm:cube} and Corollary~\ref{cor:chain}}

The proof of Theorem~\ref{thm:cube} uses Proposition~\ref{prop:twofactor} and the following 
iteration.

\begin{lemma}\label{lem:tree-iterate}
Let $1\le p\le q<\infty$, let $d\in\N$, and let
\[
X_\sigma^{(m)}\colon\T^m\to[0,\infty),
\qquad \sigma\in\cube^{d-m},
\qquad0\le m\le d,
\]
be measurable functions.  Assume that, for every $1\le m\le d$ and every $\sigma\in\cube^{d-m}$,
\begin{equation}\label{eq:tree-iterate-hyp}
\left\|X_\sigma^{(m)}(t',\cdot)\right\|_{\textup L^q(\T)}
\le
\left(X_{\sigma0}^{(m-1)}(t')^p+X_{\sigma1}^{(m-1)}(t')^p\right)^{1/p}
\end{equation}
for almost every $t'\in\T^{m-1}$.  Suppose also that the leaves are constant,
\[
X_\omega^{(0)}=x_\omega\in[0,\infty),\qquad \omega\in\cube^d.
\]
Then for every $0\le m\le d$ and every $\sigma\in\cube^{d-m}$,
\begin{equation}\label{eq:tree-iterate-concl}
\left\|X_\sigma^{(m)}\right\|_{\textup L^q(\T^m)}
\le
\left(\sum_{\tau\in\cube^m}x_{\sigma\tau}^p\right)^{1/p}.
\end{equation}
\end{lemma}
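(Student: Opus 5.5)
We argue by induction on $m$; the case $m=0$ is trivial, since $X_\sigma^{(0)}=x_\sigma$ and the right-hand side of \eqref{eq:tree-iterate-concl} reduces to the single term $x_\sigma^p$. Assume \eqref{eq:tree-iterate-concl} holds at level $m-1$ for all $\sigma\in\cube^{d-m+1}$, and fix $\sigma\in\cube^{d-m}$. Write $t=(t',t_m)$ with $t'\in\T^{m-1}$. By Fubini and the hypothesis \eqref{eq:tree-iterate-hyp},
\[
\left\|X_\sigma^{(m)}\right\|_{\textup L^q(\T^m)}^q=\int_{\T^{m-1}}\left\|X_\sigma^{(m)}(t',\cdot)\right\|_{\textup L^q(\T)}^q\dd t'
\le\int_{\T^{m-1}}\left(X_{\sigma0}^{(m-1)}(t')^p+X_{\sigma1}^{(m-1)}(t')^p\right)^{q/p}\dd t'.
\]
The right-hand side is $\bigl\|Y_0+Y_1\bigr\|_{\textup L^{q/p}(\T^{m-1})}^{q/p}$ with $Y_j:=(X_{\sigma j}^{(m-1)})^p\ge0$.

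Since $q/p\ge1$, Minkowski's inequality in $\textup L^{q/p}(\T^{m-1})$ gives
\[
\|Y_0+Y_1\|_{\textup L^{q/p}}\le\|Y_0\|_{\textup L^{q/p}}+\|Y_1\|_{\textup L^{q/p}}=\left\|X_{\sigma0}^{(m-1)}\right\|_{\textup L^q}^p+\left\|X_{\sigma1}^{(m-1)}\right\|_{\textup L^q}^p.
\]
Applying the induction hypothesis to $\sigma0$ and $\sigma1$ bounds this by
\[
\sum_{\tau\in\cube^{m-1}}x_{\sigma0\tau}^p+\sum_{\tau\in\cube^{m-1}}x_{\sigma1\tau}^p=\sum_{\tau\in\cube^{m}}x_{\sigma\tau}^p,
\]
because every $\tau\in\cube^m$ is uniquely of the form $0\tau'$ or $1\tau'$ with $\tau'\in\cube^{m-1}$. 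Raising to the power $1/p$ yields \eqref{eq:tree-iterate-concl} at level $m$.

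The only delicate points are measure-theoretic: the Fubini step needs joint measurability of $X_\sigma^{(m)}$, which is assumed, and the almost-everywhere quantifier in \eqref{eq:tree-iterate-hyp}, which suffices under the integral. The essential step is the use of Minkowski's inequality with exponent $q/p\ge1$. This is exactly where the assumption $p\le q$ enters, and it makes the iteration lossless with constant $1$.
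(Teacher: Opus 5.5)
Your proof is correct and is essentially the paper's argument: induction on $m$, integrating the fibrewise hypothesis over $t'\in\T^{m-1}$, applying Minkowski's inequality in $\textup L^{q/p}(\T^{m-1})$ to $(X_{\sigma0}^{(m-1)})^p+(X_{\sigma1}^{(m-1)})^p$, and then using the induction hypothesis on the two subtrees. The only difference is cosmetic: you raise the hypothesis to the power $q$ before integrating, whereas the paper raises it to the power $p$.
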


\begin{proof}
The proof is by induction on $m$.  The case $m=0$ is tautological.  Assume \eqref{eq:tree-iterate-concl} holds at depth $m-1$.  Set $r=q/p\ge1$.  Raising \eqref{eq:tree-iterate-hyp} to the power $p$, integrating in $t'$, and applying Minkowski's inequality in $\textup L^r(\T^{m-1})$, we get
\begin{align*}
\left\|X_\sigma^{(m)}\right\|_{\textup L^q(\T^m)}^p
&\le
\left\|X_{\sigma0}^{(m-1)\,p}+X_{\sigma1}^{(m-1)\,p}\right\|_{\textup L^r(\T^{m-1})}\\
&\le
\left\|X_{\sigma0}^{(m-1)}\right\|_{\textup L^q(\T^{m-1})}^p+
\left\|X_{\sigma1}^{(m-1)}\right\|_{\textup L^q(\T^{m-1})}^p.
\end{align*}
The induction hypothesis gives the desired sum over the two descendant subtrees.
\end{proof}

\begin{proof}[Proof of Theorem~\ref{thm:cube}]
For $0\le n\le d$ and $\sigma\in\cube^{d-n}$ define
\[
X_\sigma^{(n)}:=\bigl(\log|a_\sigma^{(n)}|^2\bigr)^{1/2},
\qquad
x_\omega:=\bigl(\log|A_\omega|^2\bigr)^{1/2}.
\]
The recursion \eqref{eq:tree-recursion} gives, for $t'=(t_1,\dots,t_{n-1})$ and $s=t_n$,
\[
a_\sigma^{(n)}(t',s)=a_{\sigma0}^{(n-1)}(t')a_{\sigma1}^{(n-1)}(t')+
 b_{\sigma0}^{(n-1)}(t')\overline{b_{\sigma1}^{(n-1)}(t')}e^{-2\pi i s}.
\]
Since the two child matrices lie in $\SU(1,1)$, Proposition~\ref{prop:twofactor} applies pointwise in $t'$.  Thus the hypotheses of Lemma~\ref{lem:tree-iterate} are satisfied, and \eqref{eq:cube-main} is its conclusion at the root.
\end{proof}

\begin{proof}[Proof of Corollary~\ref{cor:chain}]
Let $0^N\in\cube^N$ be the all-zero word, and let $e_k\in\cube^N$ be the word with a single $1$ in the $k$th position.  Define leaves
\[
H_\omega:=
\begin{cases}
G_0, & \omega=0^N,\\
G_j, & \omega=e_{N-j+1}\text{ for some }1\le j\le N,\\
I, & \text{otherwise}.
\end{cases}
\]
Let $R^{(n)}$ denote the root product generated by this sparse pattern from $G_0,\dots,G_n$.  We claim that
\begin{equation}\label{eq:sparse-tree-identity}
R^{(n)}(t_1,\dots,t_n)=G_0\wt G_1(t_1)\cdots\wt G_n(t_n).
\end{equation}
This follows by induction on $n$: the left subtree at level $n$ is the sparse tree for $G_0,\dots,G_{n-1}$, and the right subtree contains only the leaf $G_n$ at its all-zero position, hence has root $G_n$.

Applying Theorem~\ref{thm:cube} to the leaves $H_\omega$ and using \eqref{eq:sparse-tree-identity}, all identity leaves contribute $0$ to the right-hand side.  The upper-left entries of the remaining leaves are $A_0,\dots,A_N$, which proves \eqref{eq:chain}.
\end{proof}

\section{Linearization and optimality}

We use the notation
\[
\omega\cdot t:=\omega_1t_d+\omega_2t_{d-1}+\cdots+\omega_dt_1,
\qquad \omega=(\omega_1,\dots,\omega_d)\in\cube^d.
\]
This reversed pairing of binary digits and torus variables is forced by the tree recursion; it differs from the usual pairing only by a permutation of coordinates.

\begin{proposition}\label{prop:lin-cube}
Fix $d\in\N$ and complex numbers $c_\omega$, $\omega\in\cube^d$.  For $\epsilon>0$, set
\[
G_\omega(\epsilon)=
\begin{pmatrix}
\sqrt{1+\epsilon^2|c_\omega|^2}&\epsilon c_\omega\\
\epsilon\overline{c_\omega}&\sqrt{1+\epsilon^2|c_\omega|^2}
\end{pmatrix}\in\SU(1,1).
\]
Let $P_\sigma^{(n)}(\cdot;\epsilon)$ be the associated recursive products, and let $a_\sigma^{(n)}(\cdot;\epsilon)$ denote their upper-left entries.  Then, uniformly in $t\in\T^d$,
\begin{equation}\label{eq:lin-cube}
\log|a_\emptyset^{(d)}(t;\epsilon)|^2
=
\epsilon^2\left|\sum_{\omega\in\cube^d}c_\omega e^{2\pi i\omega\cdot t}\right|^2+O(\epsilon^4),
\end{equation}
and, for each $\omega\in\cube^d$,
\begin{equation}\label{eq:lin-leaf}
\log|A_\omega(\epsilon)|^2=\epsilon^2|c_\omega|^2+O(\epsilon^4).
\end{equation}
\end{proposition}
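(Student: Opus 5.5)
The plan is to prove by induction on $n$ a joint expansion of both entries of $P_\sigma^{(n)}(t;\epsilon)$, uniform in $t\in\T^n$:
\[
a_\sigma^{(n)}(t;\epsilon)=1+O(\epsilon^2),\qquad
b_\sigma^{(n)}(t;\epsilon)=\epsilon\sum_{\tau\in\cube^n}c_{\sigma\tau}e^{2\pi i\,\tau\cdot t}+O(\epsilon^3),
\]
where for $\tau\in\cube^n$ we use the pairing $\tau\cdot t=\tau_1t_n+\cdots+\tau_nt_1$. The case $n=0$ is immediate, since $\sqrt{1+\epsilon^2|c_\omega|^2}=1+O(\epsilon^2)$ and $B_\omega=\epsilon c_\omega$ exactly. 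For the inductive step, multiply out the recursion \eqref{eq:tree-recursion} using the conjugation formula for $D(s)$. This gives the formula for $a$ already recorded in the proof of Theorem~\ref{thm:cube}, together with
\[
b_\sigma^{(n)}(t',s)=a_{\sigma0}^{(n-1)}(t')\,b_{\sigma1}^{(n-1)}(t')\,e^{2\pi i s}+b_{\sigma0}^{(n-1)}(t')\,\overline{a_{\sigma1}^{(n-1)}(t')},
\]
exactly as in the depth-two example. Substituting the inductive hypotheses, and noting that $b\bar b=O(\epsilon^2)$, gives $a=1+O(\epsilon^2)$. For $b$, the two products $a\cdot b$ are each $b+O(\epsilon^3)$. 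The factor $e^{2\pi i s}$ with $s=t_n$ attaches to words $\sigma1\tau'$, and $(1\tau')\cdot(t',t_n)=t_n+\tau'\cdot t'$. This matches the reversed pairing, so the step closes. The implied constants depend only on $d$ and $\max|c_\omega|$, and this gives the uniformity.

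To get \eqref{eq:lin-cube} I will not expand $\log|a|^2$ directly. Since the product matrix lies in $\SU(1,1)$, we have $|a|^2=1+|b|^2$ exactly, so
\[
\log|a_\emptyset^{(d)}|^2=\log\bigl(1+|b_\emptyset^{(d)}|^2\bigr)=|b_\emptyset^{(d)}|^2+O(|b_\emptyset^{(d)}|^4).
\]
By the inductive claim, $|b_\emptyset^{(d)}|^2=\epsilon^2|S(t)|^2+O(\epsilon^4)$, where $S(t)=\sum_\omega c_\omega e^{2\pi i\omega\cdot t}$; the $O(\epsilon^4)$ uses that $S$ is bounded uniformly in $t$. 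The remainder $|b|^4$ is also $O(\epsilon^4)$. Finally, \eqref{eq:lin-leaf} is the special case $\log(1+\epsilon^2|c_\omega|^2)=\epsilon^2|c_\omega|^2+O(\epsilon^4)$.

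The main obstacle is getting the error orders right. A naive expansion of $a$ to second order would require tracking $\epsilon^2$ terms through the recursion. Using the identity $|a|^2=1+|b|^2$ avoids this entirely: one only needs $b$ to first order with an $O(\epsilon^3)$ error and the crude bound $a=1+O(\epsilon^2)$. The one remaining point to check carefully is the index bookkeeping, namely that the phase $e^{2\pi i t_n}$ at level $n$ corresponds to the first digit of $\omega$, which is what the reversed pairing $\omega\cdot t$ records.
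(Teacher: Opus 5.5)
Your proof is correct and follows the paper's argument. Both run an induction giving $a=1+O(\epsilon^2)$ and a first-order expansion of the off-diagonal entry with $O(\epsilon^3)$ error, then use the $\SU(1,1)$ identity $|a|^2=1+|b|^2$ at the root so that no second-order tracking is needed. The only differences are cosmetic: you track the upper-right entry $b$ through the closed-form sum $\sum_{\tau}c_{\sigma\tau}e^{2\pi i\tau\cdot t}$, whereas the paper tracks the lower-left entry $\overline{b}$ through the recursively defined $F_\sigma^{(n)}$ and unwinds the recursion at the end.
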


\begin{proof}
Write
\[
P_\sigma^{(n)}(t;\epsilon)=
\begin{pmatrix}
a_\sigma^{(n)}(t;\epsilon)&b_\sigma^{(n)}(t;\epsilon)\\
g_\sigma^{(n)}(t;\epsilon)&\overline{a_\sigma^{(n)}(t;\epsilon)}
\end{pmatrix},
\qquad g_\sigma^{(n)}=\overline{b_\sigma^{(n)}}.
\]
We prove by induction that
\begin{equation}\label{eq:lin-claim-a}
a_\sigma^{(n)}(t;\epsilon)=1+O(\epsilon^2)
\end{equation}
and
\begin{equation}\label{eq:lin-claim-g}
g_\sigma^{(n)}(t;\epsilon)=\epsilon F_\sigma^{(n)}(t)+O(\epsilon^3),
\end{equation}
uniformly in the relevant variables, where
\[
F_\omega^{(0)}:=\overline{c_\omega},
\]
and
\begin{equation}\label{eq:F-recursion}
F_\sigma^{(n)}(t_1,\dots,t_n):=
F_{\sigma0}^{(n-1)}(t_1,\dots,t_{n-1})+
 e^{-2\pi i t_n}F_{\sigma1}^{(n-1)}(t_1,\dots,t_{n-1}).
\end{equation}
The case $n=0$ is immediate.  If the claims hold at level $n-1$, then the lower-left entry of
\[
P_{\sigma0}^{(n-1)}(t';\epsilon)D(s)P_{\sigma1}^{(n-1)}(t';\epsilon)D(s)^{-1}
\]
is
\[
g_{\sigma0}^{(n-1)}(t';\epsilon)a_{\sigma1}^{(n-1)}(t';\epsilon)+
\overline{a_{\sigma0}^{(n-1)}(t';\epsilon)}g_{\sigma1}^{(n-1)}(t';\epsilon)e^{-2\pi i s},
\]
which gives \eqref{eq:lin-claim-g}.  Formula \eqref{eq:lin-claim-a} follows from the corresponding expression for the $(1,1)$-entry.

Iterating \eqref{eq:F-recursion},
\[
F_\emptyset^{(d)}(t)=\sum_{\omega\in\cube^d}\overline{c_\omega}e^{-2\pi i\omega\cdot t}
=
\overline{\sum_{\omega\in\cube^d}c_\omega e^{2\pi i\omega\cdot t}}.
\]
Since every recursive matrix lies in $\SU(1,1)$,
\[
|a_\emptyset^{(d)}(t;\epsilon)|^2=1+|g_\emptyset^{(d)}(t;\epsilon)|^2.
\]
Combining this with \eqref{eq:lin-claim-g} gives \eqref{eq:lin-cube}.  The leaf expansion \eqref{eq:lin-leaf} follows from
\[
\log|A_\omega(\epsilon)|^2=\log(1+\epsilon^2|c_\omega|^2).\qedhere
\]
\end{proof}

The proposition above shows that Theorem~\ref{thm:cube} linearizes to the sharp binary-cube Hausdorff--Young inequality of \cite[Theorem~1]{CKS25}. More precisely, let $1<p<2<q<\infty$ satisfy \eqref{eq:range}. Then, for every $d\in\N$ and all coefficients $c_\omega\in\C$,
\begin{equation}\label{eq:full-cks}
\left\|\sum_{\omega\in\cube^d}c_\omega e^{-2\pi i\omega\cdot t}\right\|_{\textup L^q(\T^d)}
\le
\left(\sum_{\omega\in\cube^d}|c_\omega|^p\right)^{1/p}.
\end{equation}
To obtain this inequality, apply Theorem~\ref{thm:cube} to the matrices $G_\omega(\epsilon)$ from Proposition~\ref{prop:lin-cube}, divide by $\epsilon$, and let $\epsilon\downarrow0$. The uniform expansion in \eqref{eq:lin-cube}, together with $|\sqrt{x}-\sqrt{y}|\le\sqrt{|x-y|}$ for $x,y\ge0$, gives the limiting left-hand side. The leaf expansion \eqref{eq:lin-leaf} gives the limiting right-hand side. Finally, the reflection $t\mapsto-t$ preserves Haar measure on $\T^d$ and yields the displayed sign in the exponential.

The exponent range in Theorem~\ref{thm:cube} is optimal. Indeed, if the nonlinear inequality held with constant $1$ for some $1<p<2<q<\infty$ not satisfying \eqref{eq:range}, the same linearization argument would give \eqref{eq:full-cks} for those exponents, contrary to \cite[Theorem~1]{CKS25}. The same range is optimal for Corollary~\ref{cor:chain}, already by the two-factor case $N=1$, whose linearization is the one-dimensional two-point inequality. The necessity of \eqref{eq:range} already holds in this case; see \cite[Section~3.1]{CKS25}.

\section{Alternating chains and additive energies}
This section records consequences of the nonlinear theorem. We apply Theorem~\ref{thm:cube} to generating functions for alternating
chains in the binary cube. Throughout this section we order $\cube^d$  lexicographically and write
\[
\wt G_\omega(t):=D(\omega\cdot t)G_\omega D(\omega\cdot t)^{-1}.
\]

\begin{lemma}\label{lem:flatten}
The root matrix in \eqref{eq:tree-recursion} satisfies 
\begin{equation*}
P_\emptyset^{(d)}(t)=\prod_{\omega\in\cube^d}^{\mathrm{lex}}\wt G_\omega(t).
\end{equation*}
\end{lemma}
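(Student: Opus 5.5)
The plan is to prove the more general statement at every level by induction on $n$: for $0\le n\le d$ and $\sigma\in\cube^{d-n}$,
\[
P_\sigma^{(n)}(t_1,\dots,t_n)=\prod_{\tau\in\cube^n}^{\mathrm{lex}}D(\tau\cdot_n t)\,G_{\sigma\tau}\,D(\tau\cdot_n t)^{-1},
\]
where $\tau\cdot_n t:=\tau_1t_n+\tau_2t_{n-1}+\cdots+\tau_nt_1$ is the reversed pairing at depth $n$. Taking $n=d$ and $\sigma=\emptyset$ gives exactly Lemma~\ref{lem:flatten}, since then $\tau\cdot_d t=\omega\cdot t$. The case $n=0$ is trivial: $\cube^0=\{\emptyset\}$, the pairing is $0$, and $D(0)=I$, so the product is $G_\sigma=P_\sigma^{(0)}$.

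For the inductive step I will use the recursion \eqref{eq:tree-recursion} and write $t'=(t_1,\dots,t_{n-1})$. The lexicographically ordered words $\tau\in\cube^n$ split into two blocks. First come all words $0\tau'$, then all words $1\tau'$, with $\tau'\in\cube^{n-1}$ lexicographically ordered inside each block. For $\tau=0\tau'$ the pairing is $\tau\cdot_n t=\tau'\cdot_{n-1}t'$, because the first digit pairs with $t_n$ and the remaining digits pair with $t_{n-1},\dots,t_1$ exactly as at depth $n-1$. By the induction hypothesis for $\sigma0$, the first block is $P_{\sigma0}^{(n-1)}(t')$. For $\tau=1\tau'$ we have $\tau\cdot_n t=t_n+\tau'\cdot_{n-1}t'$. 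Since $D$ is a homomorphism, $D(t_n+x)=D(t_n)D(x)$. So each factor in the second block equals $D(t_n)\,[D(x)G_{\sigma1\tau'}D(x)^{-1}]\,D(t_n)^{-1}$ with $x=\tau'\cdot_{n-1}t'$. Conjugation by $D(t_n)$ respects products, because the inner $D(t_n)^{-1}D(t_n)$ pairs cancel. Hence the second block is $D(t_n)P_{\sigma1}^{(n-1)}(t')D(t_n)^{-1}$. Multiplying the two blocks reproduces \eqref{eq:tree-recursion}, which closes the induction.

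The only point needing care is the bookkeeping of the reversed pairing. The new variable $t_n$ must be attached to the \emph{first} digit, which is the digit deciding which half of the lexicographic order a word falls in, while the old variables keep their positions on the remaining digits. This is precisely why $\omega\cdot t$ pairs $\omega_1$ with $t_d$. Once the depth-$n$ pairing is set up this way, the rest is the homomorphism property of $D$ and telescoping of conjugations, so I expect no real obstacle.
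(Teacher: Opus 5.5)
Your proof is correct and is essentially the paper's argument. Both proceed by induction along the tree, split the lexicographic order into the blocks with leading digit $0$ and $1$, and use $D(t_n+x)=D(t_n)D(x)$ to absorb the outer conjugation in \eqref{eq:tree-recursion} into the right block. Your induction hypothesis, stated for a general $\sigma$ with the depth-$n$ reversed pairing, simply makes explicit what the paper compresses into ``applying the formula to the left and right subtrees.''
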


\begin{proof}
For $d=1$, the formula is $G_0D(t_1)G_1D(t_1)^{-1}=\wt G_0(t)\wt G_1(t)$.  The induction step follows by applying the formula to the left and right subtrees.  Since all diagonal matrices commute,
\[
D(t_d)D(\eta\cdot t')=D(t_d+\eta\cdot t'),
\]
and the right subtree contributes the lexicographic block with leading digit $1$.
\end{proof}

\begin{proposition}\label{prop:alt-expansion}
Let $E\subseteq\cube^d$, let $0\le r<1$, and assign to each leaf the matrix
\[
G_\omega(r):=
\begin{cases}
(1-r^2)^{-1/2}\begin{pmatrix}1&r\\ r&1\end{pmatrix}, & \omega\in E,\\[1.1ex]
I, & \omega\notin E.
\end{cases}
\]
Let $a_E(t;r)$ be the $(1,1)$-entry of the root matrix.  For $k\ge0$ and $\xi\in\Z^d$, define
\[
N_{E,k}(\xi):=
\#\Bigl\{(\omega_1,\dots,\omega_{2k})\in E^{2k}:\ \omega_1<\cdots<\omega_{2k},\
\omega_2-\omega_1+\cdots+\omega_{2k}-\omega_{2k-1}=\xi\Bigr\},
\]
with $N_{E,0}(0)=1$ and $N_{E,0}(\xi)=0$ for $\xi\ne0$.  Then
\begin{equation}\label{eq:alt-expansion}
a_E(t;r)=(1-r^2)^{-|E|/2}
\sum_{k=0}^{\lfloor |E|/2\rfloor}r^{2k}\sum_{\xi\in\Z^d}N_{E,k}(\xi)e^{-2\pi i\xi\cdot t}.
\end{equation}
\end{proposition}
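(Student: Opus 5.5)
By Lemma~\ref{lem:flatten}, the root matrix is the lexicographically ordered product $\prod^{\mathrm{lex}}_{\omega\in\cube^d}\wt G_\omega(t)$. Every leaf outside $E$ gives $\wt G_\omega=I$ and can be dropped. Listing $E=\{\eta_1<\cdots<\eta_m\}$ with $m=|E|$, we get
\[
P_\emptyset^{(d)}(t)=(1-r^2)^{-m/2}\prod_{j=1}^m\begin{pmatrix}1& r\,e^{2\pi i\eta_j\cdot t}\\ r\,e^{-2\pi i\eta_j\cdot t}&1\end{pmatrix},
\]
using the conjugation formula for $D(s)$ with $s=\eta_j\cdot t$. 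The scalar prefactor $(1-r^2)^{-m/2}$ pulls out at once, so everything reduces to expanding the $(1,1)$-entry of an ordered product of the matrices $M_j:=I+rK_j$. Here $K_j$ is the off-diagonal matrix with entries $x_j:=e^{2\pi i\eta_j\cdot t}$ in position $(1,2)$ and $x_j^{-1}$ in position $(2,1)$.

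Next I expand the product $M_1\cdots M_m$ as a sum over subsets $S=\{j_1<\cdots<j_\ell\}\subseteq\{1,\dots,m\}$. Each subset contributes $r^\ell K_{j_1}\cdots K_{j_\ell}$, with the order preserved. An off-diagonal matrix swaps the two basis states, so the $(1,1)$-entry of $K_{j_1}\cdots K_{j_\ell}$ vanishes unless $\ell=2k$ is even. When $\ell=2k$, the path $1\to2\to1\to\cdots\to1$ picks up the $(1,2)$-entry of the odd-indexed factors and the $(2,1)$-entry of the even-indexed factors. The contribution is therefore
\[
x_{j_1}x_{j_2}^{-1}x_{j_3}x_{j_4}^{-1}\cdots x_{j_{2k-1}}x_{j_{2k}}^{-1}
=e^{-2\pi i(\eta_{j_2}-\eta_{j_1}+\cdots+\eta_{j_{2k}}-\eta_{j_{2k-1}})\cdot t}.
\]
This computation is a one-line induction on $\ell$.

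Finally I set $\omega_i:=\eta_{j_i}$ and group the subsets according to the alternating sum $\xi=\omega_2-\omega_1+\cdots+\omega_{2k}-\omega_{2k-1}\in\Z^d$. Here $\omega\in\cube^d$ is viewed as an integer vector, and the pairing $\xi\cdot t$ is extended linearly. Strictly increasing $2k$-tuples in $E$ correspond bijectively to $2k$-subsets $S$, so the coefficient of $r^{2k}e^{-2\pi i\xi\cdot t}$ is exactly $N_{E,k}(\xi)$. The term $k=0$, coming from $S=\emptyset$, gives the constant $1$, consistent with the convention $N_{E,0}(0)=1$. Since $2k\le m$, the sum stops at $k=\lfloor m/2\rfloor$, which yields \eqref{eq:alt-expansion}.

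The only delicate points are bookkeeping. One is the sign convention: the $(1,2)$-entry of $\wt G_\omega$ carries $e^{+2\pi i\omega\cdot t}$, so with the ordering above the odd positions enter with a minus sign in $\xi$. The other is making sure that the lexicographic order from Lemma~\ref{lem:flatten} is the same order "$<$" used in the definition of $N_{E,k}$. I would check both on the case $d=1$, $E=\cube$, where the product is $G_0D(t_1)G_1D(t_1)^{-1}$ and can be multiplied out by hand.
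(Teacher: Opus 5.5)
Your proposal is correct and follows the paper's proof essentially step for step. You flatten the tree via Lemma~\ref{lem:flatten}, drop the identity leaves, pull out $(1-r^2)^{-|E|/2}$, expand $\prod^{\mathrm{lex}}(I+rX_\omega)$ over ordered subsets, and note that only even-length words contribute to the $(1,1)$-entry, with phase $e^{-2\pi i(\omega_2-\omega_1+\cdots+\omega_{2k}-\omega_{2k-1})\cdot t}$. Your sign and ordering checks agree with the paper's conventions: the order $<$ in $N_{E,k}$ is the lexicographic order fixed at the start of that section.
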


\begin{proof}
By Lemma~\ref{lem:flatten}, we may omit the identity factors and write
\[
P_\emptyset^{(d)}(t)=(1-r^2)^{-|E|/2}
\prod_{\omega\in E}^{\mathrm{lex}}
\left(I+rX_\omega(t)\right),
\]
where
\[
X_\omega(t):=\begin{pmatrix}0&e^{2\pi i\omega\cdot t}\\ e^{-2\pi i\omega\cdot t}&0\end{pmatrix}.
\]
The $(1,1)$-entry receives contributions only from even products.  For $\omega_1<\cdots<\omega_{2k}$,
\[
\bigl(X_{\omega_1}(t)\cdots X_{\omega_{2k}}(t)\bigr)_{11}
=e^{-2\pi i(\omega_2-\omega_1+\cdots+\omega_{2k}-\omega_{2k-1})\cdot t}.
\]
Collecting equal frequencies gives \eqref{eq:alt-expansion}.
\end{proof}

The expansion \eqref{eq:alt-expansion} is related to the generating
functions for alternating ordered partitions of integers studied
by Saksida \cite{Saksida24}. Here the alternating sums are formed
from lexicographically ordered points of $E\subseteq\cube^d$
and take values in $\Z^d$.

Let $1<p<2<q<\infty$ satisfy \eqref{eq:range}, and let $E\subseteq\cube^d$. Applying Theorem~\ref{thm:cube} to the leaf family in the above proposition gives, for \(0\le r<1\),  
\begin{equation}\label{eq:alt-nonlinear}
\left\|\bigl(\log|a_E(\cdot;r)|^2\bigr)^{1/2}\right\|_{\textup L^q(\T^d)}
\le
|E|^{1/p}\left(\log\frac1{1-r^2}\right)^{1/2}.
\end{equation}

\noindent Indeed, each marked leaf contributes \(\log(1-r^2)^{-1}\), and every unmarked leaf contributes \(0\).

\begin{corollary}
Let $m\ge2$ be an integer and set
\begin{equation*}
p_m:=\frac{2m}{\log_2\binom{2m}{m}}.
\end{equation*}
For $E\subseteq\cube^d$ and $0\le r<1$, define
\[
\mu_{E,r}(\xi):=\sum_{k=0}^{\lfloor |E|/2\rfloor}r^{2k}N_{E,k}(\xi),
\qquad \xi\in\Z^d.
\]
Then
\begin{equation}\label{eq:centered-weighted-energy}
\int_{\T^d}
\left(
\left|\sum_{\xi\in\Z^d}\mu_{E,r}(\xi)e^{-2\pi i\xi\cdot t}\right|^2-(1-r^2)^{|E|}
\right)^m\dd t
\le
C_{m,|E|}(r)|E|^{2m/p_m},
\end{equation}
where
\begin{equation*}
C_{m,n}(r):=(1-r^2)^{mn}
\left(
\frac{\left(\frac{1+r}{1-r}\right)^n-1}{\log\left(\left(\frac{1+r}{1-r}\right)^n\right)}
\log\frac1{1-r^2}
\right)^m.
\end{equation*}
When both the numerator and denominator in the fraction vanish, the fraction is interpreted as $1$.
\end{corollary}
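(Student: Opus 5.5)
The plan is to read \eqref{eq:centered-weighted-energy} as a consequence of the nonlinear bound \eqref{eq:alt-nonlinear} at the endpoint exponents $q=2m$, $p=p_m$, after converting $\log|a_E|^2$ back to $|a_E|^2-1$ by a pointwise comparison.

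\emph{Step 1: the exponents are admissible.} Set $q:=2m$, so $q>2$ since $m\ge2$. By definition of $p_m$, $1/p_m=(1/q)\log_2\binom{q}{q/2}$, so \eqref{eq:range} holds with equality. I also need $1<p_m<2$. This is the elementary fact $2^{m}<\binom{2m}{m}<4^{m}$ for $m\ge2$: the upper bound follows from the binomial sum, and the lower bound from $\binom{4}{2}=6>4$ together with the ratio $\binom{2m+2}{m+1}/\binom{2m}{m}=(4m+2)/(m+1)\ge2$.

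\emph{Step 2: rewrite the integrand.} Put $n:=|E|$ and $Y(t):=|a_E(t;r)|^2\ge1$. By \eqref{eq:alt-expansion},
\[
\left|\sum_{\xi\in\Z^d}\mu_{E,r}(\xi)e^{-2\pi i\xi\cdot t}\right|^2=(1-r^2)^{n}Y(t).
\]
Hence the integrand in \eqref{eq:centered-weighted-energy} equals $(1-r^2)^{mn}(Y-1)^m$.

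\emph{Step 3: a uniform upper bound for $Y$.} By Lemma~\ref{lem:flatten}, the root matrix is a product of $n$ nonidentity factors $D(\cdot)G_\omega(r)D(\cdot)^{-1}$. Conjugation by the unitary $D(s)$ preserves the operator norm. The symmetric matrix $(1-r^2)^{-1/2}\bigl(\begin{smallmatrix}1&r\\ r&1\end{smallmatrix}\bigr)$ has eigenvalues $(1\pm r)(1-r^2)^{-1/2}$, so its norm is $\sqrt{(1+r)/(1-r)}$. Bounding the entry $|a_E|$ by the norm of the product therefore gives
\[
Y\le M:=\Bigl(\frac{1+r}{1-r}\Bigr)^{n}.
\]

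\emph{Step 4: pointwise comparison of $Y-1$ with $\log Y$.} The function $\phi(L):=(e^L-1)/L$, with $\phi(0)=1$, is increasing on $[0,\infty)$. Writing $L=\log Y\in[0,\log M]$, this gives
\[
Y-1=\phi(L)\,L\le \frac{M-1}{\log M}\,\log Y .
\]
When $r=0$ we have $M=1$ and $Y\equiv1$, so both sides of \eqref{eq:centered-weighted-energy} vanish; this matches the convention that the fraction is $1$.

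\emph{Step 5: combine.} Raising the comparison of Step 4 to the $m$-th power and integrating gives
\[
\int_{\T^d}(Y-1)^m\le\Bigl(\frac{M-1}{\log M}\Bigr)^m\int_{\T^d}(\log Y)^{m}.
\]
The last integral equals $\|(\log Y)^{1/2}\|_{\textup L^{2m}(\T^d)}^{2m}$. By \eqref{eq:alt-nonlinear} with $q=2m$ and $p=p_m$, it is at most $n^{2m/p_m}\bigl(\log\frac1{1-r^2}\bigr)^m$. Multiplying by $(1-r^2)^{mn}$ yields exactly $C_{m,n}(r)\,n^{2m/p_m}$.

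\emph{Main obstacle.} The argument rests on the linearizing comparison between $Y-1$ and $\log Y$, which in turn needs the uniform upper bound on $Y$. The expected source of $C_{m,n}$ is the operator-norm bound of Step 3, which is the value of $Y$ at $t=0$. The rest is the monotonicity of $\phi$ and bookkeeping.
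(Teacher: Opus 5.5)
Your proof is correct and follows the paper's argument essentially step for step: the same choice $q=2m$, $p=p_m$ in \eqref{eq:alt-nonlinear}, the same identity $\widehat\mu_{E,r}=(1-r^2)^{n/2}a_E$, and the same submultiplicative operator-norm bound $1\le Y\le M$. Your monotonicity of $(e^L-1)/L$ is the same chord inequality the paper obtains from concavity of $\log$ on $[1,M]$, and your explicit check that $1<p_m<2$ is a useful addition that the paper leaves implicit. One aside in your closing remark is inaccurate, though the proof does not use it: $M$ is the squared operator norm of the root matrix at $t=0$, not $Y(0)$; with $r=\tanh\theta$ that matrix is a hyperbolic rotation by $n\theta$, so $Y(0)=\cosh^2(n\theta)<e^{2n\theta}=M$ whenever $r>0$.
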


\begin{proof}
Take $q=2m$ and $p=p_m$ in \eqref{eq:alt-nonlinear}.  Let $n:=|E|$.  The cases $n=0$ and $r=0$ are trivial, so assume $n\ge1$ and $0<r<1$.  By Proposition~\ref{prop:alt-expansion} we have
\[
\widehat\mu_{E,r}(t):=\sum_{\xi\in\Z^d}\mu_{E,r}(\xi)e^{-2\pi i\xi\cdot t}
=(1-r^2)^{n/2}a_E(t;r).
\]
Every marked factor in the flattened product is unitarily conjugate to
\[
H_r=(1-r^2)^{-1/2}\begin{pmatrix}1&r\\ r&1\end{pmatrix},
\]
whose operator norm is $((1+r)/(1-r))^{1/2}$. Since the root matrix lies in $\SU(1,1)$, we have
$|a_E(t;r)|^2\ge1$. On the other hand, submultiplicativity
of the operator norm gives
\[
|a_E(t;r)|^2\le M_n(r):=
\left(\frac{1+r}{1-r}\right)^n.
\] 
The concavity of $\log$ on $[1,M_n(r)]$ now gives
\[
x-1\le \frac{M_n(r)-1}{\log M_n(r)}\log x,
\qquad 1\le x\le M_n(r).
\]

Apply this inequality with $x=|a_E(t;r)|^2$ and raise both sides
to the power $m$. Then integrate over $\T^d$ and use
\eqref{eq:alt-nonlinear}. Since
\[
|\widehat\mu_{E,r}(t)|^2-(1-r^2)^n
=(1-r^2)^n\bigl(|a_E(t;r)|^2-1\bigr),
\]
we obtain \eqref{eq:centered-weighted-energy}.
\end{proof}

Finally, let $m\ge2$ be an integer.  For $E\subseteq\cube^d$, define
\begin{equation}\label{eq:additive-energy}
\mathsf E_m(E):=
\#\Bigl\{(x_1,\dots,x_m,y_1,\dots,y_m)\in E^{2m}:\ x_1+\cdots+x_m=y_1+\cdots+y_m\Bigr\}.
\end{equation}
Then
\begin{equation*}
\mathsf E_m(E)\le |E|^{2m/p_m}.
\end{equation*}

Indeed, as $r\downarrow0$, we have
\[
\widehat\mu_{E,r}(t)=1+r^2\sum_{\omega_1<\omega_2\in E}e^{-2\pi i(\omega_2-\omega_1)\cdot t}+O(r^4),
\]
uniformly in $t$. Hence,
\[
|\widehat\mu_{E,r}(t)|^2-(1-r^2)^{|E|}
=r^2\left|\sum_{\omega\in E}e^{-2\pi i\omega\cdot t}\right|^2+O(r^4).
\]
Also $C_{m,|E|}(r)=r^{2m}(1+O(r))$.  Dividing the estimate \eqref{eq:centered-weighted-energy} by $r^{2m}$ and letting $r\downarrow0$, we obtain
\[
\int_{\T^d}\left|\sum_{\omega\in E}e^{-2\pi i\omega\cdot t}\right|^{2m}\dd t
\le |E|^{2m/p_m}.
\]
By orthogonality of characters, the integral on the left equals
$\mathsf E_m(E)$. This sharp bound is already contained in the
linear theorem \cite{CKS25}. Here we recover it from
\eqref{eq:centered-weighted-energy} by letting $r\downarrow0$.

\section*{Declaration of AI usage}
During the preparation of this manuscript, ChatGPT 5.4 Pro and Gemini 3.1 Pro were used to explore research directions, suggest intermediate lemmas, draft portions of the exposition, proofread the text, and refine the language. However, the original concepts, main ideas for the proofs, proof optimization, and the foundational writing of the manuscript are entirely the work of the authors. The authors independently verified all mathematical claims, proofs, and references, and assume full responsibility for the final content.

\section*{Acknowledgments and funding}
Both authors were supported in part by the Croatian Science Foundation under the project number HRZZ-IP-2022-10-5116 (FANAP). T.C. was supported in part by the European Union - NextGenerationEU, project: IP-UNIST-44 (ITPEM).

\bibliography{multi_nonlin_hy_SU}{}

@article {CK01a,
    AUTHOR = {Christ, Michael and Kiselev, Alexander},
     TITLE = {Maximal functions associated to filtrations},
   JOURNAL = {J. Funct. Anal.},
  FJOURNAL = {Journal of Functional Analysis},
    VOLUME = {179},
      YEAR = {2001},
    NUMBER = {2},
     PAGES = {409--425},
      ISSN = {0022-1236,1096-0783},
   MRCLASS = {47B38 (47H30)},
  MRNUMBER = {1809116},
MRREVIEWER = {Isaac\ V.\ Shragin},
       DOI = {10.1006/jfan.2000.3687},
       URL = {https://doi.org/10.1006/jfan.2000.3687},
}

@article {CK01b,
    AUTHOR = {Christ, Michael and Kiselev, Alexander},
     TITLE = {W{KB} asymptotic behavior of almost all generalized
              eigenfunctions for one-dimensional {S}chr\"odinger operators
              with slowly decaying potentials},
   JOURNAL = {J. Funct. Anal.},
  FJOURNAL = {Journal of Functional Analysis},
    VOLUME = {179},
      YEAR = {2001},
    NUMBER = {2},
     PAGES = {426--447},
      ISSN = {0022-1236,1096-0783},
   MRCLASS = {34E20 (34L10 81Q20)},
  MRNUMBER = {1809117},
       DOI = {10.1006/jfan.2000.3688},
       URL = {https://doi.org/10.1006/jfan.2000.3688},
}

@article {CKS25,
    AUTHOR = {Crmari\'{c}, Ton\'{c}i and Kova\v{c}, Vjekoslav and Shiraki, Shobu},
     TITLE = {Inequalities in {F}ourier analysis on binary cubes},
   JOURNAL = {Discrete Anal.},
  FJOURNAL = {Discrete Analysis},
       URL = {https://arxiv.org/abs/2507.01359},
      YEAR = {2026},
      NOTE = {To appear}
}

@book {HLP,
    AUTHOR = {Hardy, G. H. and Littlewood, J. E. and P\'olya, G.},
     TITLE = {Inequalities},
      NOTE = {2d ed},
 PUBLISHER = {Cambridge, at the University Press},
      YEAR = {1952},
     PAGES = {xii+324},
   MRCLASS = {27.0X},
  MRNUMBER = {46395},
}

@article {Karamata32,
    AUTHOR = {Karamata, Jovan},
     TITLE = {Sur une in{\'{e}}galit{\'{e}} relative aux fonctions convexes},
   JOURNAL = {Publ. Math. Univ. Belgrade},
    VOLUME = {1},
      YEAR = {1932},
     PAGES = {145--148},
}

@article {K12,
    AUTHOR = {Kova\v{c}, Vjekoslav},
     TITLE = {Uniform constants in {H}ausdorff-{Y}oung inequalities for the
              {C}antor group model of the scattering transform},
   JOURNAL = {Proc. Amer. Math. Soc.},
  FJOURNAL = {Proceedings of the American Mathematical Society},
    VOLUME = {140},
      YEAR = {2012},
    NUMBER = {3},
     PAGES = {915--926},
      ISSN = {0002-9939,1088-6826},
   MRCLASS = {34L25 (42A38)},
  MRNUMBER = {2869075},
MRREVIEWER = {Petru\ A.\ Cojuhari},
       DOI = {10.1090/S0002-9939-2011-11078-5},
       URL = {https://doi.org/10.1090/S0002-9939-2011-11078-5},
}

@article {KOR19,
    AUTHOR = {Kova\v{c}, Vjekoslav and Oliveira e Silva, Diogo and Rup\v{c}i\'{c}, Jelena},
     TITLE = {A sharp nonlinear {H}ausdorff-{Y}oung inequality for small
              potentials},
   JOURNAL = {Proc. Amer. Math. Soc.},
  FJOURNAL = {Proceedings of the American Mathematical Society},
    VOLUME = {147},
      YEAR = {2019},
    NUMBER = {1},
     PAGES = {239--253},
      ISSN = {0002-9939,1088-6826},
   MRCLASS = {42A38 (34L25)},
  MRNUMBER = {3876746},
MRREVIEWER = {Valeri\ S.\ Serov},
       DOI = {10.1090/proc/14268},
       URL = {https://doi.org/10.1090/proc/14268},
}

@article {KOR22,
    AUTHOR = {Kova\v{c}, Vjekoslav and Oliveira e Silva, Diogo and Rup\v{c}i\'{c}, Jelena},
     TITLE = {Asymptotically sharp discrete nonlinear {H}ausdorff-{Y}oung
              inequalities for the {$\rm SU(1,1)$}-valued {F}ourier
              products},
   JOURNAL = {Q. J. Math.},
  FJOURNAL = {The Quarterly Journal of Mathematics},
    VOLUME = {73},
      YEAR = {2022},
    NUMBER = {3},
     PAGES = {1179--1188},
      ISSN = {0033-5606,1464-3847},
   MRCLASS = {42A05 (42C05 43A25)},
  MRNUMBER = {4479843},
       DOI = {10.1093/qmath/haac011},
       URL = {https://doi.org/10.1093/qmath/haac011},
}

@article {MTT03,
    AUTHOR = {Muscalu, Camil and Tao, Terence and Thiele, Christoph},
     TITLE = {A {C}arleson type theorem for a {C}antor group model of the
              scattering transform},
   JOURNAL = {Nonlinearity},
  FJOURNAL = {Nonlinearity},
    VOLUME = {16},
      YEAR = {2003},
    NUMBER = {1},
     PAGES = {219--246},
      ISSN = {0951-7715,1361-6544},
   MRCLASS = {34L25 (34A55 47E05)},
  MRNUMBER = {1950785},
       DOI = {10.1088/0951-7715/16/1/314},
       URL = {https://doi.org/10.1088/0951-7715/16/1/314},
}

@article {OSTTW12,
    AUTHOR = {Oberlin, Richard and Seeger, Andreas and Tao, Terence and
              Thiele, Christoph and Wright, James},
     TITLE = {A variation norm {C}arleson theorem},
   JOURNAL = {J. Eur. Math. Soc. (JEMS)},
  FJOURNAL = {Journal of the European Mathematical Society (JEMS)},
    VOLUME = {14},
      YEAR = {2012},
    NUMBER = {2},
     PAGES = {421--464},
      ISSN = {1435-9855,1435-9863},
   MRCLASS = {42A20 (42A16 42A45 42A61)},
  MRNUMBER = {2881301},
MRREVIEWER = {Alexander\ V.\ Tovstolis},
       DOI = {10.4171/JEMS/307},
       URL = {https://doi.org/10.4171/JEMS/307},
}

@article {OeS18,
    AUTHOR = {Oliveira e Silva, Diogo},
     TITLE = {A variational nonlinear {H}ausdorff-{Y}oung inequality in the
              discrete setting},
   JOURNAL = {Math. Res. Lett.},
  FJOURNAL = {Mathematical Research Letters},
    VOLUME = {25},
      YEAR = {2018},
    NUMBER = {6},
     PAGES = {1993--2015},
      ISSN = {1073-2780,1945-001X},
   MRCLASS = {43A05},
  MRNUMBER = {3934855},
MRREVIEWER = {Krishnan\ Parthasarathy},
       DOI = {10.4310/MRL.2018.v25.n6.a15},
       URL = {https://doi.org/10.4310/MRL.2018.v25.n6.a15},
}

@article {Poltoratski24,
    AUTHOR = {Poltoratski, A.},
     TITLE = {Pointwise convergence of the non-linear {F}ourier transform},
   JOURNAL = {Ann. of Math. (2)},
  FJOURNAL = {Annals of Mathematics. Second Series},
    VOLUME = {199},
      YEAR = {2024},
    NUMBER = {2},
     PAGES = {741--793},
      ISSN = {0003-486X,1939-8980},
   MRCLASS = {42A38 (30D15 34L25)},
  MRNUMBER = {4713022},
MRREVIEWER = {Michael\ T.\ Lacey},
       DOI = {10.4007/annals.2024.199.2.4},
       URL = {https://doi.org/10.4007/annals.2024.199.2.4},
}

@article {Rupcic19,
    AUTHOR = {Rup\v{c}i\'{c}, Jelena},
     TITLE = {Convergence of lacunary {${\rm SU}(1,1)$}-valued trigonometric
              products},
   JOURNAL = {Commun. Pure Appl. Anal.},
  FJOURNAL = {Communications on Pure and Applied Analysis},
    VOLUME = {19},
      YEAR = {2020},
    NUMBER = {3},
     PAGES = {1275--1289},
      ISSN = {1534-0392,1553-5258},
   MRCLASS = {42A55 (40A20)},
  MRNUMBER = {4064031},
MRREVIEWER = {Sergey\ V.\ Astashkin},
       DOI = {10.3934/cpaa.2020062},
       URL = {https://doi.org/10.3934/cpaa.2020062},
}

@article {Saksida22,
    AUTHOR = {Saksida, Pavle},
     TITLE = {Discrete nonlinear {F}ourier transforms and their inverses},
   JOURNAL = {Inverse Problems},
  FJOURNAL = {Inverse Problems. An International Journal on the Theory and
              Practice of Inverse Problems, Inverse Methods and Computerized
              Inversion of Data},
    VOLUME = {38},
      YEAR = {2022},
    NUMBER = {8},
     PAGES = {Paper No. 085003, 22},
      ISSN = {0266-5611,1361-6420},
   MRCLASS = {65T50 (42A38)},
  MRNUMBER = {4459107},
MRREVIEWER = {Min\ Ku},
       DOI = {10.1088/1361-6420/ac73ae},
       URL = {https://doi.org/10.1088/1361-6420/ac73ae},
}

@article {Saksida24,
    AUTHOR = {Saksida, Pavle},
     TITLE = {On the beta distribution, the nonlinear {F}ourier transform
              and a combinatorial problem},
   JOURNAL = {Ars Math. Contemp.},
  FJOURNAL = {Ars Mathematica Contemporanea},
    VOLUME = {24},
      YEAR = {2024},
    NUMBER = {1},
     PAGES = {Paper No. 8, 20},
      ISSN = {1855-3966,1855-3974},
   MRCLASS = {42A38 (05A17 60E05)},
  MRNUMBER = {4640468},
       DOI = {10.26493/1855-3974.2976.f76},
       URL = {https://doi.org/10.26493/1855-3974.2976.f76},
}

@book {Simon1,
    AUTHOR = {Simon, Barry},
     TITLE = {Orthogonal polynomials on the unit circle. {P}art 1},
    SERIES = {American Mathematical Society Colloquium Publications},
    VOLUME = {54, Part 1},
      NOTE = {Classical theory},
 PUBLISHER = {American Mathematical Society, Providence, RI},
      YEAR = {2005},
     PAGES = {xxvi+466},
      ISBN = {0-8218-3446-0},
   MRCLASS = {42-02 (30C85 33C45 42C05 47B36 47N50)},
  MRNUMBER = {2105088},
MRREVIEWER = {P.\ L.\ Duren},
       DOI = {10.1090/coll054.1},
       URL = {https://doi.org/10.1090/coll054.1},
}

@book {Simon2,
    AUTHOR = {Simon, Barry},
     TITLE = {Orthogonal polynomials on the unit circle. {P}art 2},
    SERIES = {American Mathematical Society Colloquium Publications},
    VOLUME = {54, Part 2},
      NOTE = {Spectral theory},
 PUBLISHER = {American Mathematical Society, Providence, RI},
      YEAR = {2005},
     PAGES = {i--xxii and 467--1044},
      ISBN = {0-8218-3675-7},
   MRCLASS = {42-02 (30C85 33C45 42C05 47B36 47N50)},
  MRNUMBER = {2105089},
MRREVIEWER = {P.\ L.\ Duren},
       DOI = {10.1090/coll/054.2/01},
       URL = {https://doi.org/10.1090/coll/054.2/01},
}

@misc {Suragan2026,
       AUTHOR = {Durvudkhan Suragan},
        TITLE = {The nonlinear {H}ausdorff-{Y}oung inequality},
       EPRINT = {2608.15895},
ARCHIVEPREFIX = {arXiv},
 PRIMARYCLASS = {math.FA},
          URL = {https://arxiv.org/abs/2608.15895},
         YEAR = {2026},
}

@misc {TT03,
       AUTHOR = {Terence Tao and Christoph Thiele},
        TITLE = {Nonlinear {F}ourier Analysis},
       EPRINT = {1201.5129},
ARCHIVEPREFIX = {arXiv},
 PRIMARYCLASS = {math.CA},
          URL = {https://arxiv.org/abs/1201.5129},
         YEAR = {2012},
         NOTE = {Unpublished lecture notes based on lectures from 2003},
}

@article {Verblunsky36,
    AUTHOR = {Samuel Verblunsky},
     TITLE = {On {P}ositive {H}armonic {F}unctions},
   JOURNAL = {Proc. London Math. Soc. (2)},
  FJOURNAL = {Proceedings of the London Mathematical Society. Second Series},
    VOLUME = {40},
      YEAR = {1936},
     PAGES = {290--320},
      ISSN = {0024-6115},
   MRCLASS = {99-04},
  MRNUMBER = {1575824},
       DOI = {10.1112/plms/s2-40.1.290},
}
\bibliographystyle{plainurl}

\end{document}